\theoremstyle{plain}
\newtheorem{theorem}{Theorem}[section]
\newtheorem{proposition}[theorem]{Proposition}
\newtheorem{lemma}[theorem]{Lemma}
\newtheorem{corollary}[theorem]{Corollary}
\numberwithin{equation}{section}
\theoremstyle{definition}
\newtheorem{definition}[theorem]{Definition}
\newtheorem{remark}[theorem]{Remark}
\newtheorem{example}[theorem]{Example}
\newtheorem{question}[theorem]{Question}
\newtheorem{convention}[theorem]{Convention}
\newcommand{\C}{\mathbb{C}}
\newcommand{\R}{\mathbb{R}}
\newcommand{\Z}{\mathbb{Z}}
\newcommand{\CP}{\mathbb{C}P}
\newcommand{\balpha}{\boldsymbol{\alpha}}
\DeclareMathOperator{\wed}{wed}
\DeclareMathOperator{\relint}{relint}
\DeclareMathOperator{\conv}{conv}
\begin{document}
\title[Projective toric manifolds]{Wedge operations and a new family of projective toric manifolds}

\author[S.Choi]{Suyoung Choi}
\address{Department of Mathematics, Ajou University, San 5, Woncheondong, Yeongtonggu, Suwon 443-749, Korea}
\email{schoi@ajou.ac.kr}

\author[H.Park]{Hanchul Park}
\address{School of Mathematics, Korea Instutute for Advanced Study (KIAS),
85 Hoegiro Dongdaemun-gu, Seoul 130-722, Republic of Korea}
\email{hpark@kias.re.kr}

\thanks{This research was supported by Basic Science Research Program through the National Research Foundation of Korea(NRF) funded by the Ministry of Science, ICT \& Future Planning(NRF-2012R1A1A2044990).}

\date{\today}

\subjclass[2010]{14M25, 57S25, 52B11, 52B35, 14J10}

\keywords{classification of toric varaties, projective toric variety, simplicial wedge, Shephard diagram, puzzle}
%

\begin{abstract}

    Let $P_m(J)$ denote a simplicial complex obtainable from consecutive wedge operations from an $m$-gon. In this paper, we completely classify toric manifolds over $P_m(J)$ and prove that all of them are projective. As a consequence, we provide an infinite family of projective toric manifolds.

\end{abstract}
\maketitle

\tableofcontents
\section{Introduction}

A \emph{toric variety} of complex dimension $n$ is a normal complex algebraic variety of dimension $n$ equipped with an effective algebraic action of $(\C^\ast)^n$ with an open dense orbit. Every toric variety corresponds one-to-one to a combinatorial object called a \emph{fan} and the fan is a key object to classify toric varieties. Among toric varieties, we are mainly interested in complete smooth toric varieties which are also called \emph{toric manifolds}.
The \emph{underlying simplicial complex} of a simplicial fan is the face complex of the fan. A toric manifold whose underlying simplicial complex is $K$ is also called a toric manifold over $K$.

There is a classical operation of simplicial complexes called \emph{wedge operation}. For a simplicial complex $K$, the wedge of $K$ at a vertex $v$ of $K$ is denoted by $\wed_vK$ and a simplicial complex obtained by a series of wedges from $K$ can be written as $K(J)$ for a tuple $J$ of positive integers. See \cite{BBCG10} for details. As the authors have shown in \cite{CP13} and \cite{CP15}, the wedge operation plays an important role in classification of toric manifolds. Roughly speaking, for every toric manifold $M$ over a wedge of $K$, there are two toric manifolds $M_1$ and $M_2$ over $K$ called {projections of $M$ over $K$} and $M$ is determined by $M_1$ and $M_2$. Colloquially, if we know all toric manifolds over $K$, then we know all toric manifolds over $K(J)$. In fact, this method (let us call it \emph{classification-by-wedge}) works for a lot of categories of toric spaces -- quasitoric manifolds, topological toric manifolds, omnioriented quasitoric manifolds, almost complex quasitoric manifolds, small covers, and so on.

Let us present some examples of toric manifolds over wedges. The Bott manifolds are a classical example of a family of toric manifolds and they can be also understood as toric manifolds over the boundary complex of the hypercube $I^n = [0,1]^n$. A result of the first author, Masuda, and Suh \cite{CMS10} shows that generalized Bott manifolds are exactly toric manifolds over the boundary complex of a product of simplices.  Note that they are obtained by a sequence of wedges of the boundary complex of $I^n$.

Another remarkable example is toric manifolds of Picard number 3. Their projectivity was shown by Kleinschmidt and Sturmfels \cite{KS91} and the classification of them was completed by {Batyrev} \cite{Ba}. Actually, it can be shown that they are toric manifolds over the wedges of the boundary complex of $I^3$ or the pentagon, and both the classification and the projectivity can be also obtained by the method using wedges by the authors \cite{CP13}, which is far more concise and systematic.

Therefore, a very natural next step in the classification of toric manifolds would be the classification of toric manifolds over $P_m(J)$, where $P_m$ denotes the boundary complex of an $m$-gon. Note that the classification of toric manifolds over polygons -- or toric manifolds of complex dimension 2 -- is well known. In Section~\ref{sec:2}, we accomplish the classification in the language of ``diagrams'' in \cite{CP15}. Furthermore, in Section~\ref{sec:3}, we show the following:
\begin{theorem}\label{thm:cpisproj}
    Every toric manifold over $P_m(J)$ is projective for any $m\ge 3$ and an $m$-tuple $J \in \Z_+^m$.
\end{theorem}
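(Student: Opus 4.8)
The plan is to induct on the total number of wedges $\sum_{i=1}^m (J_i-1)$, following the classification-by-wedge philosophy recalled in the introduction. The base case is $J=(1,\dots,1)$, where $P_m(J)=P_m$ and $M$ is a smooth complete toric surface; every such variety is projective, since any complete smooth fan in $\R^2$ is the normal fan of a convex polygon. For the inductive step I would write $P_m(J)=\wed_v P_m(J')$ with one fewer wedge, form the two projections $M_1,M_2$ of $M$ over $P_m(J')$, which are projective by the inductive hypothesis, and transport projectivity up the wedge. What makes an induction of this shape viable is that a wedge adds one vertex and raises the dimension by one at the same time, so the Picard number $N-n=m-2$ is constant along the entire tower: the space in which projectivity has to be certified has a fixed dimension, independent of $J$.

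Concretely, I would read projectivity as non-emptiness of the ample cone. For a toric manifold $M$ over a complex on $N$ vertices of dimension $n$, a support function is strictly convex precisely when its class lies in an open cone $C(M)\subset \Pic(M)\otimes\R\cong\R^{N-n}$, cut out by one linear inequality per wall (adjacent pair of maximal simplices), and $M$ is projective iff $C(M)\neq\varnothing$. Dually --- and this is where the keyword \emph{Shephard diagram} enters --- one passes to the Gale transform of the ray configuration, a vector arrangement in $\R^{m-2}$, and reads polytopality off as a Shephard-type condition on its chambers. The essential point is that for every $J$ this picture lives in the \emph{same} space $\R^{m-2}$, and the wedge affects the arrangement in a combinatorially explicit way. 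I would then track the wall inequalities of $\wed_v P_m(J')$: they separate into inequalities pulled back from those defining $C(M_1)$ and $C(M_2)$, together with finitely many new inequalities produced by the walls that the wedge creates.

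The main obstacle is exactly this lifting step, because projectivity does \emph{not} propagate through wedges in general: non-projective smooth complete toric varieties exist, and for $m\ge 6$ they cannot be ruled out by the Picard number alone. Consequently $M_1$ and $M_2$ must not be treated as independent --- they form a \emph{compatible} pair, and it is precisely the compatibility recorded by the diagram data of Section~\ref{sec:2} that has to be injected into the computation. The heart of the proof is therefore to show that a class lying in the (non-empty, by induction) intersection of the ample cones of the two projections can always be adjusted along the new wedge coordinate so as to satisfy the finitely many new wall inequalities. I expect this to reduce, through the explicit form of the classification, to a single feasibility check on the Shephard diagram --- the ``puzzle'' --- that is uniform in $J$ and whose solvability is inherited from the polygon $P_m$ handled in the base case.
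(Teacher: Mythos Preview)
Your outline correctly identifies the relevant objects (constant Picard number, Shephard/Gale picture, Proposition~\ref{prop:shephardofwedge}) and honestly flags the obstacle, but the inductive step you describe is precisely the content of the theorem, and you do not actually carry it out. The sentence ``a class lying in the intersection of the ample cones of the two projections can always be adjusted along the new wedge coordinate'' is where the entire difficulty sits, and it is not dispatched by an expectation. In the Shephard picture there is in fact \emph{no} new coordinate to adjust: by Proposition~\ref{prop:shephardofwedge} the diagram of $\wed_v K$ lives in the \emph{same} $\R^{m-3}$ and simply carries one extra point $\widehat v'$; the question is whether $S(\widehat\Sigma_1)$ and $S(\widehat\Sigma_2)$, computed in that common ambient space, actually overlap. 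Knowing each is nonempty does not give this --- that is exactly Question~\ref{que:projectiveprojection}, which is open in general --- so an unstructured induction on the number of wedges cannot close.

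The paper does not induct. It first uses the classification of Section~\ref{sec:2} in a much sharper way than you do: Proposition~\ref{prop:square} says that an irreducible realizable puzzle over $P_m$ can only involve wedging at two vertices $1$ and $\ell$ whose rays are \emph{opposite}, so after discarding canonical extensions one may assume $j_i=1$ for all $i\notin\{1,\ell\}$. This is the structural input you allude to but never cash in. With that reduction in hand the Shephard diagram has a very concrete shape: the points $\{\widehat\alpha\}\cup\{\widehat\beta\}$ span a hyperplane $H$ and admit a Radon point $R$ (Lemma~\ref{lem:radon}); every coface of the base fan already contains a small triangle $B_\epsilon(R)\cap\widehat 1\,\widehat\ell\,R$ (Proposition~\ref{prop:smalltriangle}); and the affine relations coming from the shift description of edges force all the wedge points $\widehat 1_i,\widehat\ell_k$ to lie in a single $2$-plane through $R$, on one side of $H$, with the $\widehat 1$'s and $\widehat\ell$'s angularly separated (Lemma~\ref{lem:halfplane}, Proposition~\ref{prop:halfplane}). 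An innermost triangle $\widehat 1_j\,\widehat\ell_p\,R$ then sits inside every coface simultaneously, so $S(\widehat\Sigma)\ne\varnothing$. The argument is a direct, global construction of a point in the Shephard region rather than a one-wedge-at-a-time lift; your proposal is missing both the reduction to wedging at two opposite rays and the Radon-point geometry that makes the intersection visibly nonempty.
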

Together with the fact that every generalized Bott manifold is projective, Theorem~\ref{thm:cpisproj} generalizes the result of \cite{KS91}.
Moreover, Theorem~\ref{thm:cpisproj} is a nontrivial fact even though every two $\C$-dimensional toric manifold is projective, because {classification-by-wedge seems to fail for the category of projective toric manifolds or toric Fano manifolds.} In non-smooth case, there exists a non-projective toric orbifold over $\wed_v K$ whose two projections over $K$ are projective; see Example~7.1 of \cite{CP13}.

\begin{question}\label{que:projectiveprojection}
    For a star-shaped simplicial complex $K$ and its vertex $v$, let $M$ be a toric manifold over $\wed_vK$ whose two projections $M_1$ and $M_2$ over $K$ are projective. Then is $M$ projective?
\end{question}

There is a criterion to determine whether a toric manifold is projective or not. It is
a version of Gale duality called \emph{Shephard diagrams} \cite{She71}. Even though the answer of Question~\ref{que:projectiveprojection} is currently unknown, the Shephard diagram works well together with classification-by-wedge since a Shephard diagram of a toric manifold $M$ over a wedge of $K$ is given by Shephard diagrams of projections of $M$ over $K$. This is a key tool to prove Theorem~\ref{thm:cpisproj}.

We call a polytopal simplicial complex $K$ is \emph{weakly combinatorially Delzant} or \emph{WCD} if there is a projective toric manifold over $K$. Theorem~\ref{thm:cpisproj} tells us that for all $m$ and $J$, $P_m(J)$ is WCD and does not support non-projective toric manifolds. We say that such simplicial complexes are \emph{strongly combinatorially Delzant} or \emph{SCD}. Examples of SCD complexes contains joins of simplices (corresponding to generalized Bott manifolds) and $P_m(J)$. We know that wedges or joins of WCD complexes are also WCD since they support canonical extensions introduced in \cite{Ewa86} or products of toric manifolds. Hence, the following question looks natural:

\begin{question}
    Let $K$ and $L$ be SCD complexes and $v$ a vertex of $K$. Then is $\wed_vK$ or $K\ast L$ SCD? Here, $K\ast L$ denotes the simplicial join of $K$ and $L$.
\end{question}

Note that the stellar subdivision does not necessarily preserve SCD property (see Oda's example of non-projective toric manifold in \cite{oda88}), but it does preserve WCD property since it corresponds to the equivariant blow-up of a toric manifold.

The WCD and SCD properties expand hierarchy of star-shaped simplicial complexes. See Figure~\ref{fig:hierarchy}. Let $K$ be a star-shaped simplicial sphere of dimension $n-1$ equipped with an orientation $o$ as a simplicial manifold. Then the characteristic map $(K,\lambda)$ is said to be \emph{positively oriented} or simply \emph{positive} if the sign of $\det(\lambda(i_1),\dotsc,\lambda(i_n))$ coincides with $o(\sigma)$ for any oriented maximal simplex $\sigma=(i_1, \ldots, i_n)\in K$. Note that every fan-giving characteristic map can be positively oriented and therefore the inclusion in Figure~\ref{fig:hierarchy}.

\begin{figure}\label{fig:hierarchy}
\begin{center}
\begin{tikzpicture}[scale=0.11]
    \draw[thick] (-6,-3) rectangle (98,66);
    \draw (45,63) node {star-shaped simplicial complexes};
    \draw[rounded corners=20pt] (0,0) rectangle (90,60);
    \draw (45,57) node {{support $\Z_2$-ch. maps}};
    \draw[rounded corners=15pt] (6,3) rectangle (84,54);
    \draw (45,51) node {{support $\Z$-ch. maps}};
    \draw[rounded corners=10pt] (10,6) rectangle (63,48);
    \draw (36,45) node {support positive ch. maps};
    \draw[rounded corners=10pt] (14,9) rectangle (54,42);
    \draw (33.5,39) node {supp. non-singular fans};
    \draw[rounded corners=10pt] (18,12) rectangle (95,36);
    \draw (73,33) node {polytopal};
    \draw[rounded corners=10pt] (22,15) rectangle (50,33);
    \draw (36,30) node {WCD};
    \draw[rounded corners=5pt] (26,18) rectangle (47,27);
    \draw (36,23) node {SCD};
\end{tikzpicture}
    \caption{Hierarchy of star-shaped simplicial complexes}
\end{center}
\end{figure}
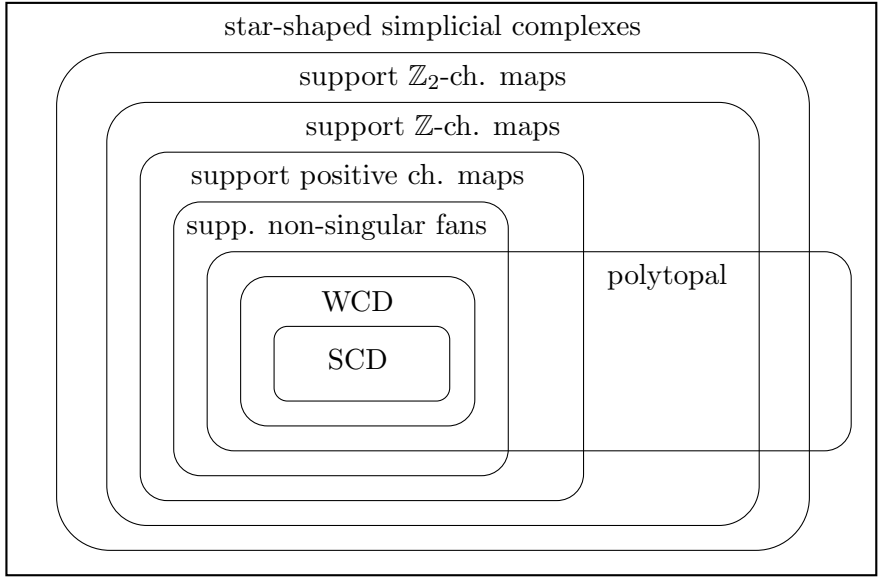

This paper is organized as follows. In Section~\ref{sec:1}, we review the classification of toric manifolds of complex dimension $2$ and give a complete classification of them using the language of fans. In Section~\ref{sec:2}, we classify the toric manifolds over $P_m(J)$ by studying the structure of the diagram $D(P_m)$ as you can see in \cite{CP15}. In Section~\ref{sec:3}, we prove Theorem~\ref{thm:cpisproj}.
\bigskip

\section{Plane fans and toric manifolds of $\C$-dimension 2} \label{sec:1}
The purpose of this section is to give a classification of complete non-singular fans on the plane $\R^2$. Actually, the classification of toric manifolds of complex dimension two was accomplished by Orlik-Raymond \cite{OR}, but we think it is certainly meaningful to prove it again using the language of fans.

Let $r$ be a ray in $\R^n$ starting at the origin. One says that a nonzero vector $v$ \emph{generates} $r$ if $v\in r$. In other words, $r = \{ av \mid a\ge 0\}$.
Let $\Sigma$ be a complete non-singular fan in $\R^2$ with $m$ rays $(m\ge 3)$. Label the primitive vectors generating the rays with
\begin{equation}\label{eqn:planefan}
    v_1,v_2,\dotsc,v_m
\end{equation}
in counterclockwise order. Write $v_i = {x_i \choose y_i} $ and then $\Sigma$ corresponds to the following characteristic matrix
    \[
        \lambda = \begin{pmatrix}
            x_1 & x_2    &   \cdots & x_m \\
            y_1 & y_2     &  \cdots & y_m
        \end{pmatrix}.
    \]
{Throughout this paper, when a fan in $\R^2$ is given, we assume that its rays are labeled in counterclockwise order. We frequently assume that $v_1=\binom10$ and $v_2 = \binom01$ by basis change of $\Z^2$.}

By non-singularity condition, we have the identity $x_iy_{i+1} - x_{i+1}y_i = 1$ for $i=0,\dotsc,m$, where $v_0 = v_m$. By subtracting each hand side of $x_iy_{i+1} - x_{i+1}y_i = 1$ from that of $x_{i-1}y_i - x_iy_{i-1} = 1$, we obtain $y_i(x_{i-1}+x_{i+1}) = x_i(y_{i-1}+y_{i+1})$. Therefore there exists an integer $a_i$ such that $x_{i-1}+x_{i+1} = a_ix_i$ and $y_{i-1}+y_{i+1} = a_iy_i$, so that $v_{i-1}+v_{i+1} = a_iv_i$ for $i=1,\dotsc,m$. This trick using $a_i$ appears in \cite{Ma99}.

For any sequence of vectors $v_0, v_1,\dotsc,v_m=v_0$ corresponding to a complete non-singular fan, the sequence
\[
    v_0, v_1,\dotsc, v_i, v_i+v_{i+1}, v_{i+1}, \dotsc, v_m
\]
also corresponds to a complete non-singular fan. We call this new fan a \emph{blow-up} of the original fan. This {naming} is justified by the fact that this operation corresponds to an equivariant blow-up of the toric manifold.

\begin{theorem}
    If $m\ge 5$, then $\Sigma$ is a blow-up.
\end{theorem}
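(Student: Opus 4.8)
The plan is to reformulate ``$\Sigma$ is a blow-up'' entirely in terms of the integers $a_i$ and then to locate an index $i$ with $a_i=1$. First I would record that, taking the determinant of $v_{i-1}+v_{i+1}=a_iv_i$ against $v_{i-1}$ and using non-singularity, one gets $a_i=\det(v_{i-1},v_{i+1})$. Hence $a_i=1$ is equivalent to $v_i=v_{i-1}+v_{i+1}$, which in turn is equivalent to saying that deleting the ray $v_i$ leaves the cone $\langle v_{i-1},v_{i+1}\rangle$ non-singular, i.e. that $\Sigma$ is the blow-up of a complete non-singular fan at that cone. Thus the theorem reduces to the arithmetic claim: if $m\ge 5$ then $a_i=1$ for some $i$. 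I would also note the area identity $\det(v_i-v_{i-1},v_{i+1}-v_{i-1})=2-a_i$, so that $a_i\le 1$ exactly when $v_i$ lies on the far side of the chord $v_{i-1}v_{i+1}$ from the origin (a \emph{convex} ray) and $a_i\ge 2$ when it lies on the near side (a \emph{reflex} ray).

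Two elementary estimates drive the argument. The first is a \emph{maximum principle}: for a generic primitive covector $\ell$ the integers $h_i=\ell(v_i)$ satisfy $h_{i+1}=a_ih_i-h_{i-1}$, and since the rays positively span $\R^2$ the minimum value $h_q$ is strictly negative; comparing $a_qh_q=h_{q-1}+h_{q+1}>2h_q$ (strict for generic $\ell$) and dividing by $h_q<0$ forces $a_q\le 1$. More generally every ray that is a vertex of $\conv\{v_1,\dots,v_m\}$ is convex, so $a_i\le 1$ there; in particular not all $a_i$ can be $\ge 2$. The second estimate is angular: writing $\alpha_i\in(0,\pi)$ for the angle of the cone $\langle v_i,v_{i+1}\rangle$, one has $\sum_i\alpha_i=2\pi$ and $a_i=r_{i-1}r_{i+1}\sin(\alpha_{i-1}+\alpha_i)$ with $r_i=|v_i|$, so $a_i\le 0$ iff $\alpha_{i-1}+\alpha_i\ge\pi$. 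Since $\sum_i(\alpha_{i-1}+\alpha_i)=4\pi$ and every summand is positive, at most three indices can have $a_i\le 0$; equivalently at least $m-3\ge 2$ of the $a_i$ are $\ge 1$.

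Now I would argue by contradiction, assuming no $a_i$ equals $1$, so that each $a_i$ is either $\le 0$ or $\ge 2$. The convex rays all have $a_i\le 1$, hence $\le 0$, hence lie among the at most three non-positive indices; as any closed winding polygon has at least three convex vertices, there are exactly three of them and $\conv\{v_1,\dots,v_m\}$ is a lattice triangle $T$ containing the origin, while the remaining $m-3\ge 2$ rays are reflex and sit on unimodular chains running inside $T$ between consecutive triangle vertices. Feeding this back into the angular accounting (the three triangle vertices contribute spanning angle $\ge\pi$ each, totalling $\ge 3\pi$, while the reflex rays contribute further positive angle against the fixed budget $4\pi$) already eliminates the possibility of four or more convex vertices and squeezes the triangle case; the remaining task is to show that a non-trivial reflex chain inside a lattice angle of opening $<\pi$ cannot occur in such a fan without producing some $a_i=1$.

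This last step is the \textbf{main obstacle}, since neither the sum of the $a_i$ nor the angle count alone rules out the triangle configuration: the genuine constraint is that the ordered product $\prod_{i=1}^m\bigl(\begin{smallmatrix}a_i&1\\-1&0\end{smallmatrix}\bigr)$ equals the identity, equivalently the global identity $\sum_i a_i=3m-12$, which I would establish first (it is invariant under blow-up and a direct check on the minimal cases, or follows from the relation in $SL_2(\Z)$). To finish I would run the maximum principle \emph{inside} the sub-cone cut out by one non-positive corner: the other $m-1$ rays lie in an angle $\le\pi$, and a ray maximizing a suitable functional among the interior ones of this chain is simultaneously convex ($a\le 1$) and has spanning angle $<\pi$ ($a\ge 1$), forcing $a=1$ --- the delicate point being to choose the corner and the functional so that the maximizer is genuinely interior, which is exactly where $m\ge 5$ (hence at least two interior rays) and the triangle structure are used. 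I expect the bookkeeping to split cleanly into the case $a_j=0$ (a pair of antipodal neighbours, reducing literally to a unimodular chain in a half-plane, where the deepest interior ray visibly has $a=1$) and the case $a_j<0$, the former being the model case and the latter following by the same convexity argument after the half-plane is replaced by the narrower cone.
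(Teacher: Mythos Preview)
Your reduction to the arithmetic claim ``some $a_i=1$'' and the two structural observations --- that convex-hull vertices have $a_i\le 1$ and that at most three indices can have $a_i\le 0$ when $m\ge 5$ --- are correct and useful. The difficulty is exactly where you flag it: the ``main obstacle'' is not resolved, and your sketch for it has a real gap. A linear functional $\ell$ achieves its maximum over $\{v_1,\dotsc,v_m\}$ only at vertices of their convex hull, and under your contradiction hypothesis those are precisely the three corners $v_p,v_q,v_r$ where $a\le 0$. So after deleting one corner $v_p$ and maximizing $\ell$ over the remaining rays, the maximizer will again be a corner ($v_q$ or $v_r$), and you learn only $a_q\le 1$, which you already knew. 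Restricting the maximization to ``interior'' rays does not help either: the inequality $h_{j-1}+h_{j+1}\le 2h_j$ requires the actual cyclic neighbours $v_{j\pm1}$, which may themselves be corners with larger $h$-value. The identity $\sum a_i=3m-12$ is correct, but I do not see it closing this gap without substantial further argument.

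The paper sidesteps the obstruction by replacing the linear functional with the Euclidean norm. The key advantage is that $\|\cdot\|$ can have local maxima that are \emph{not} extreme points of $\conv\{v_i\}$, so one is not forced back onto the three corners. Concretely: any locally maximal $v_i$ (meaning $\|v_i\|\ge\|v_{i\pm1}\|$) satisfies $|a_i|\le 1$ by the triangle inequality. If this $a_i\ne 1$, change basis so that $v_1=\binom10$, $v_2=\binom01$, $v_m=\binom{a_1}{-1}$ with $a_1\in\{0,-1\}$. Now either the global maximum of the new norm lies among $v_3,\dotsc,v_{m-1}$ and we are done, or every ray has norm $\le\sqrt2$; in the latter case a count of primitive lattice vectors of norm $\le\sqrt2$ in the arc from $v_2$ to $v_m$ (at most one of norm $1$, at most two of norm $\sqrt2$) together with $m\ge 5$ forces some $v_i$ with $3\le i\le m-1$ to have norm $\sqrt2$, hence to be locally maximal. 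For this $v_i$ the spanning angle is strictly less than $\pi$ (all of $v_2,\dotsc,v_m$ lie in an open half-plane), so among the possibilities $a_i\in\{-1,0,1\}$ only $a_i=1$ survives. This is both shorter and avoids the convex-hull trap that blocks your linear-functional approach.
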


\begin{proof}
    We have the identity $v_{i-1}+v_{i+1} = a_iv_i$ for $i=1,\dotsc,m$, where $v_0=v_m$ and $v_{m+1}=v_1$. The fan $\Sigma$ is a blow-up if and only if $a_i = 1$ for some $i$. Among $v_1,\dotsc,v_m$, a vector $v_i$ called \emph{locally maximal} if $\|v_i\|\ge \|v_{i-1}\|$ and $\|v_i\|\ge \|v_{i+1}\|$. There always exists a locally maximal vector (for example just pick a vector of maximal length). We can assume that $v_1$ is locally maximal and then $v_m+v_2 = a_1v_1$ and
    \[
        |a_1|(\|v_m\|+\|v_2\|) \le 2 \|a_1v_1\| = 2\|v_m+v_2\| < 2(\|v_m\|+\|v_2\|),
    \]
    concluding that $|a_1| \le 1$. If $a_1 = 1$, it is done. Otherwise, we change the basis so that $v_1 = {1\choose 0}$ and $v_2 = {0\choose 1}$. Then $v_m = {a_1 \choose {-1}}$ ($a_1=0$ or $-1$). Be careful that $v_1$ no longer needs to be locally maximal after changing basis.

    Now, we claim that there is a locally maximal vector other than $v_1,v_2$, and $v_m$. Find a vector $v_i$ of maximal length. If $3 \le i \le m-1$, it is done. If not, notice that $\|v_2\|=1$ and $\|v_m\| = 1$ or $\sqrt 2$ and these are the smallest possible lengths of integral vectors. Between $v_2$ and $v_m$, there are 1 slot for integral vectors of length 1 and 2 (or less) slots for length $\sqrt 2$. Since $m\ge 5$, it is easy to observe that the claim holds.

    Suppose that $v_i$ is the vector in the previous claim. By the same argument as above, $a_i =1,0,$ or $-1$. When $O$ is the origin, observe that
    \[
        a_i = \left\{
                \begin{array}{ll}
                  1, & \hbox{if $\angle v_{i-1}Ov_{i} + \angle v_{i}O v_{i+1} < \pi$;} \\
                  0, & \hbox{if $ \angle v_{i-1}Ov_{i} + \angle v_{i}O v_{i+1} = \pi$;} \\
                  -1, & \hbox{if $\angle v_{i-1}Ov_{i} + \angle v_{i}O v_{i+1} > \pi$.}
                \end{array}
              \right.
    \]
    But in our setting, $\angle v_{i-1}Ov_{i} + \angle v_{i}O v_{i+1} < \pi$. Thus $a_i = 1$ and the proof is complete.
\end{proof}

The following corollary reproves a result of \cite{OR}.

\begin{corollary}
    Every toric manifold of complex dimension two is either $\CP^2$ or obtained by equivariant blow-ups from a Hirzebruch surface.
\end{corollary}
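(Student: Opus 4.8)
The plan is to argue by induction on the number $m$ of rays of the complete non-singular fan $\Sigma$ corresponding to the given toric manifold, using the Theorem above as the inductive engine. The key preliminary observation is the one already recorded in the text: a blow-up of a fan corresponds to an equivariant blow-up of the associated toric manifold. Concretely, if $a_i=1$ for some $i$, that is $v_{i-1}+v_{i+1}=v_i$, then deleting the ray $v_i$ produces a complete non-singular fan with one fewer ray whose blow-up recovers $\Sigma$; hence the phrase ``$\Sigma$ is a blow-up'' literally exhibits the toric manifold as an equivariant blow-up of a smaller toric manifold of complex dimension two.

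First I would dispose of the two base cases by direct normalization, taking $v_1={1\choose0}$ and $v_2={0\choose1}$. When $m=3$, the non-singularity relations force $v_3={-1\choose-1}$, and the resulting fan is exactly the fan of $\CP^2$. When $m=4$, the relations give $v_3={-1\choose \ast}$ and $v_4={\ast\choose-1}$, and the single remaining condition $\det(v_3,v_4)=1$ leaves just one integer parameter $a$; the fan is then that of a Hirzebruch surface $\mathbb F_a$ (for instance $v_3={-1\choose0}$, $v_4={a\choose-1}$). Thus every three-ray fan is $\CP^2$ and every four-ray fan is a Hirzebruch surface.

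For the inductive step, suppose $m\ge5$. By the Theorem, $a_i=1$ for some $i$, so $\Sigma$ is a blow-up of the complete non-singular fan $\Sigma'$ obtained by deleting $v_i$, and $\Sigma'$ has $m-1\ge4$ rays. By the inductive hypothesis $\Sigma'$ is either a Hirzebruch surface, in the case $m-1=4$, or an iterated equivariant blow-up of one, in the case $m-1\ge5$; either way $\Sigma'$ is an iterated equivariant blow-up of a Hirzebruch surface, and therefore so is $\Sigma$. Together with the base cases this yields the dichotomy, the $\CP^2$ alternative arising precisely from the three-ray fans.

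Since the entire burden of the argument is carried by the Theorem, I do not expect a serious obstacle. The only points that genuinely require care are the two base-case normalizations and the remark that for $m\ge5$ the reduction always lands on a fan with at least four rays, so the inductive hypothesis applies directly and one never has to treat $\CP^2$ as an intermediate reduction step.
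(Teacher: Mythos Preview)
Your argument is correct and is exactly the intended one: the paper states the corollary without proof, as an immediate consequence of the preceding theorem, and the deduction is precisely the induction on $m$ that you spell out, with the base cases $m=3$ (giving $\CP^2$) and $m=4$ (giving the Hirzebruch surfaces). One minor remark on the $m=4$ normalization: the remaining relation $x_4y_3=0$ a priori gives two one-parameter families, but they are interchanged by swapping the roles of the axes, so your conclusion stands.
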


\section{The Classification} \label{sec:2}
There is a fundamental operation on simplicial complexes called \emph{simplicial wedging}. Note that every complete non-singular fan in $\R^2$ has face structure of a polygon. In this section, our objective is to classify complete non-singular fans over a simplicial wedge of an $m$-gon when $m\ge 4$. In the case $m=3$, note that any simplicial wedge of a triangle is the boundary complex of a simplex and the corresponding toric manifold is nothing but a complex projective space. A main tool in the classification is in \cite{CP15}.
Let us review very briefly the concept of diagrams and puzzles. We recommend to see \cite{CP15} for details. Let $K$ be a star-shaped simplicial complex with $m$ vertices and $J = (j_1,\dotsc,j_m)\in \Z_+^m$ a positive integer tuple. Roughly speaking, the \emph{pre-diagram} of $K$, written as $D'(K)$, is an edge-colored graph whose vertices are non-singular fans over $K$ (up to equivalence) and each edge indicates a non-singular fan (up to equivalence) over  $K(J)$. Furthermore, one has a set of subsquares of $D'(K)$ determined only by $K$ called \emph{realizable squares}. The \emph{diagram} of $K$, denoted by $D(K)$, is $D'(K)$ equipped with the set of realizable squares of $D'(K)$.
The diagram $D'(K)$ gives a complete classification of toric manifolds over $K(J)$ for any $J\in \Z_+^m$ in the following sense. Let $G(J)$ be the 1-skeleton of the product of simplices $\prod_{i=1}^m \Delta^{j_i-1}$ suitably edge-colored. Then the following holds.
\begin{theorem}\cite[Theorem~5.4]{CP15}
    Up to D-J equivalence, every toric manifold over $K(J)$ corresponds one-to-one to a graph homomorphism $p\colon G(J) \to D'(K)$ such that
    \begin{enumerate}
        \item $p$ preserves edge coloring,
        \item and every subsquare of $p$ is realizable in $D'(K)$.
    \end{enumerate}
\end{theorem}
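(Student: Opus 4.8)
The plan is to prove the bijection by induction on the total number of wedges $w(J)=\sum_{i=1}^m (j_i-1)$, using the single-wedge decomposition recalled in the introduction as the engine. That engine says: for a single wedge $\wed_v K$, a toric manifold $M$ over it is determined by its two projections $M_1,M_2$ over $K$, and conversely a pair of fans over $K$ that actually arises as such a pair of projections is exactly what an edge of $D'(K)$ of color $v$ records. Hence the theorem holds when $J$ has one entry equal to $2$ and the rest equal to $1$, since then $G(J)$ is a single colored edge. The base case $J=(1,\dots,1)$ is immediate: $K(J)=K$, $G(J)$ is a single vertex, a graph homomorphism from a point is just a vertex of $D'(K)$, and both sides then count fans over $K$ up to D-J equivalence.

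For the inductive step I would peel off one wedge. Fixing a vertex with $j_v\ge 2$, write $K(J)=\wed_{v^\ast}\!\big(K(J')\big)$, where $J'$ lowers $j_v$ by one and $v^\ast$ is a vertex of the $v$-fiber; on the graph side this adjoins to $G(J')$ one further slice in the $v$-direction, joined to the previous slices by edges of color $v$ (reflecting that the factor $\Delta^{j_v-1}$ grows from $\Delta^{j_v-2}$). Given a toric manifold $M$ over $K(J)$, its two projections over $K(J')$ are, by the inductive hypothesis, color-preserving and square-realizable homomorphisms $G(J')\to D'(K)$; reassembling these according to how $G(J')$ sits inside $G(J)$, and recording the single-wedge datum on the $v$-colored edge created by $v^\ast$, yields a homomorphism $p\colon G(J)\to D'(K)$. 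Color preservation is built into the construction, and every subsquare of $p$ is realizable because a subsquare corresponds to a two-fold wedge at two \emph{distinct} vertices, and is therefore the image under projection of a genuine sub-fan of $M$. This defines the forward map; the reverse map undoes each step.

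The crux is the reverse direction: one must show that a color-preserving $p$ all of whose subsquares are realizable assembles into a \emph{single} fan over $K(J)$, unique up to D-J equivalence, rather than merely a locally consistent but globally incoherent collection of fans. The danger is a higher, cubical obstruction — consistency over every $2$-face of $\prod_i\Delta^{j_i-1}$ might in principle fail to force consistency over the whole product. I would rule this out by the same induction: restricting $p$ to the previous slices reconstructs, by hypothesis, a fan over $K(J')$, while the $v$-colored edges together with the realizability of the squares spanning the new slice against the old directions are precisely the compatibility needed to present the new data as a legitimate pair of projections, so the single-wedge engine extends the reconstructed fan across $\wed_{v^\ast}$. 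Put differently, the realizable-square condition is exactly the assertion that the obstruction lives in codimension two and no higher, which is what makes $G(J)$ together with its square compatibilities sufficient. Uniqueness up to D-J equivalence propagates from the single-wedge case through the induction. I expect the heaviest bookkeeping to lie in cleanly separating the same-vertex repeated wedges, which merely build up the simplex factors $\Delta^{j_i-1}$, from the distinct-vertex wedges, which produce the genuinely new square constraints.
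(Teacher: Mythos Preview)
The paper does not prove this statement; it is quoted from \cite{CP15} (as Theorem~5.4 there) purely as background for the classification carried out in Section~\ref{sec:2}. There is therefore no proof in the present paper against which to compare your attempt.

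That said, your inductive scheme on $w(J)=\sum_i(j_i-1)$ is the natural one and is in spirit what \cite{CP15} does. The forward direction is fine. In the reverse direction your sketch has a real elision. When you write $K(J)=\wed_{v^\ast}K(J')$, the single-wedge engine says a fan over $K(J)$ is the same as a pair of fans over $K(J')$ joined by an edge of $D'(K(J'))$ of color~$v^\ast$; but your puzzle $p$ lives in $D'(K)$, not $D'(K(J'))$. To close the induction you must know that the data visible in $D'(K)$ --- the $v$-colored edges and the realizable squares they span with the other colors --- actually certify an edge of $D'(K(J'))$. You assert that these are ``precisely the compatibility needed,'' but that is the content of the theorem, not a step in its proof. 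Concretely, the two reconstructed fans over $K(J')$ share all rays except the one at $v^\ast$; assembling them into a fan over the wedge is a non-singularity check on every maximal cone of $\wed_{v^\ast}K(J')$, and you have not explained why the square hypotheses in $D'(K)$ force all of those determinants to be $\pm1$. In \cite{CP15} this gap is closed by working at the level of characteristic matrices in standard form (cf.\ the ``standard form'' used in the proof of Proposition~\ref{prop:shift} here), which makes the non-singularity bookkeeping explicit; your outline would need either that matrix-level argument or a separate lemma identifying edges of $D'(K(J'))$ with realizable sub-puzzles in $D'(K)$.
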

Such $p$ is called a \emph{realizable puzzle}.
For later use, let us introduce one more term. A realizable puzzle $p$ is called \emph{irreducible} if for any edge $e\in G(J)$ whose endpoints are $\balpha$ and $\balpha'$, their images $p(\balpha)$ and $p(\balpha')$ are different in $D'(K)$. Otherwise, $p$ is called \emph{reducible}. Reducible realizable puzzles correspond to canonical extensions.

Let $P_m$ be a polygon with vertex set $[m]=\{1,\dotsc,m\}$. To classify toric manifolds over $P_m(J)$, we consider the diagram $D(P_m)$ in the category of toric manifolds.
\begin{proposition}\label{prop:shift} Let $\Sigma_1$ and $\Sigma_2$ be two complete non-singular fans with $m$ rays in $\R^2$ and the matrix
    \[
        \lambda = \begin{pmatrix}
            1 & 0   & x_3   & x_4 & \cdots & x_m \\
            0 & 1   & y_3   & y_4 &  \cdots & y_m
        \end{pmatrix}
    \]
    a characteristic matrix for $\Sigma_1$. Suppose that in the diagram $D(P_m)$, the two fans are connected by an edge colored as 1. Then either of the following holds:
    \begin{enumerate}
        \item two fans are the same.
        \item two fans share a ray generated by $\binom{-1}0$.
    \end{enumerate}
    In the second case, a characteristic matrix for $\Sigma_2$ can be written as the following:
    \[
        \lambda_e := \begin{pmatrix}
            1 & 0 & x_3 & \cdots & x_{\ell-1} & -1 & x_{\ell+1}+e  & x_{\ell+2} - y_{\ell+2}e & \cdots & x_m - y_m e \\
            0 & 1 & y_3 & \cdots & y_{\ell-1} & 0  & y_{\ell+1}=-1 & y_{\ell+2}               & \cdots & y_m
        \end{pmatrix}
    \]
    where $x_\ell = -1,\,y_\ell=0$ for $3\le \ell \le m-1$ and $e \in \Z$.
    Conversely, for every $e\in\Z$, $\lambda_e$ is connected to $\lambda$ by an edge of $D(P_m)$ colored as $1$.
\end{proposition}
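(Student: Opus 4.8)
The plan is to read a color-$1$ edge through the wedge picture of \cite{CP15}. An edge of $D(P_m)$ colored $1$ joining $\Sigma_1$ and $\Sigma_2$ is, by definition, a complete non-singular fan $\mathcal{F}$ over $\wed_1 P_m$ in $\R^3$ whose two projections over $P_m$ are $\Sigma_1$ and $\Sigma_2$. First I would fix a basis of $\Z^3$ so that the characteristic vector of the wedge vertex $1'$ is $w_{1'}=(0,0,1)$ and every other vector is a lift $w_i=(x_i,y_i,z_i)$ of the ray $v_i=\binom{x_i}{y_i}$ of $\Sigma_1$; then $\Sigma_1$ is recovered by dropping the third coordinate, while $\Sigma_2$ is the image of the star of the vertex $1$ under the quotient $\pi_1\colon\R^3\to\R^3/\R w_1$, whose rays in the natural cyclic order are $\pi_1 w_{1'},\pi_1 w_2,\dots,\pi_1 w_m$. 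The only remaining freedom is the shear $z_i\mapsto z_i+ax_i+by_i$; I would record that all the non-singularity conditions below are invariant under it, so that I may normalize $z_1=0$.

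Next I would extract the non-singularity conditions. The facets of $\wed_1 P_m$ are $\{1,1',2\}$, $\{1,1',m\}$, and $\{1,k,k+1\},\{1',k,k+1\}$ for $k=2,\dots,m-1$; a direct determinant computation shows that all of these except the $\{1,k,k+1\}$ are automatically unimodular, being precisely the relations $\det(v_k,v_{k+1})=1$ of $\Sigma_1$. The facets $\{1,k,k+1\}$ give the genuine constraints $y_kz_{k+1}-y_{k+1}z_k+z_1=\pm1$ for $k=2,\dots,m-1$. In the gauge $z_1=0$ these say that the planar vectors $(y_k,z_k)$ have unit consecutive determinants $\epsilon_k$; since these are, up to the orientation of $\R^3/\R w_1$, the consecutive determinants of the rays of $\Sigma_2$, a consistent orientation forces all $\epsilon_k$ equal, and the value $\det(\pi_1 w_{1'},\pi_1 w_2)=-1$ pins them to $\epsilon_k=-1$.

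The heart of the proof is to combine these unit determinants with the recursion $v_{k-1}+v_{k+1}=a_kv_k$ of $\Sigma_1$. Applying the subtraction trick of Section~\ref{sec:1} to the pairs $(y_k,z_k)$, the recursion $z_{k-1}+z_{k+1}=a_kz_k$ holds exactly at the indices with $y_k\ne0$, i.e.\ everywhere except the boundary index $k=2$ (where $(y_1,z_1)=(0,0)$ degenerates) and the unique index $\ell$, if it exists, with $v_\ell=\binom{-1}{0}$. On each block where it holds, $(z_k)$ solves the same second-order recursion as $(x_k)$ and $(y_k)$, hence $z_k=\alpha x_k+\beta y_k$ there. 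The $k=2$ condition together with $x_3=-1$ forces $\alpha=1$, and the shear kills $\beta$ on the left block, giving $z_k=x_k$ for $k\le\ell$; since this yields $z_\ell=-1$, the right block also has $\alpha=1$, leaving $z_k=x_k-ey_k$ for $k\ge\ell$ with $e\in\Z$. I would then check that the two junction conditions at $k=\ell-1$ and $k=\ell$ hold identically, so that $e$ is genuinely free.

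It remains to assemble the answer and prove the converse. If no ray $\binom{-1}{0}$ occurs, the recursion holds across all interior indices, so $z_k=x_k$ throughout; computing $\Sigma_2=\pi_1(\text{star}\,1)$ and swapping the two coordinates of $\R^3/\R w_1$ returns $v_1,\dots,v_m$, which is alternative (1). If $v_\ell=\binom{-1}{0}$ occurs, the same computation followed by the coordinate swap $\binom{y}{z}\mapsto\binom{z}{y}$ and the relabeling $1'\leftrightarrow1$ produces exactly $\lambda_e$, whose $\ell$-th column is $\binom{-1}{0}$, which is alternative (2). For the converse, given any $e\in\Z$ I would take the lift $z_1=0$, $z_k=x_k$ for $k\le\ell$ and $z_k=x_k-ey_k$ for $k\ge\ell$; by the computations above every facet determinant equals $-1$, so $\mathcal{F}$ is a non-singular fan realizing the edge between $\lambda$ and $\lambda_e$. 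The hard part will be the recursion step of the third paragraph: recognizing that the single horizontal ray $\binom{-1}{0}$ is the only place where the two one-sided solutions can decouple, so that the solution set is a genuine one-parameter family rather than rigid, and carrying the sign and orientation bookkeeping (all $\epsilon_k=-1$, the shear, the coordinate swap, and the relabeling $1'\leftrightarrow1$) carefully enough to land on the precise normal form $\lambda_e$ rather than merely on a fan equivalent to it.
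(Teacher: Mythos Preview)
Your proof is correct and takes essentially the same approach as the paper: both realize the color-$1$ edge as a non-singular characteristic map over $\wed_1 P_m$ and solve the non-singularity constraints on the third row to find that the freedom is controlled by the (at most one) horizontal ray $\binom{-1}{0}$. The paper shortcuts your facet-by-facet enumeration and projection computation by invoking the standard form of \cite{CP15}, in which the perturbations $e_i$ (your $z_i-x_i$, after the shear) satisfy the homogeneous relation $e_iy_{i+1}=e_{i+1}y_i$ that telescopes directly to $e_i=-y_ie_{\ell+1}$, whereas you route through the $a_k$-recursion of Section~\ref{sec:1}; the substance is the same.
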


\begin{proof}
    Let us consider a characteristic map over $\wed_1 P_m$ corresponding to the edge. Due to (3.1) of \cite{CP15}, we can think of its corresponding standard form:
    \[
    \begin{pmatrix}
        1_1 &   1_2 &   2   & 3   &  4  & \cdots & m   \\ \hline
        1   &   0   &   0   & x_3 & x_4 & \cdots & x_m \\
        0   &   0   &   1   & y_3 & y_4 & \cdots & y_m \\
        -1  &   1   &   e_2 & e_3 & e_4 & \cdots & e_m
    \end{pmatrix},
    \]
    where $e_i$, $2\le i \le m$, is an integer. By adding $-e_2$ times the second row to the third row, one can assume that $e_2 = 0$. The projection with respect to $1_2$ is just $\Sigma_1$ and the projection with respect to $1_1$ has the characteristic matrix
    \[
        \begin{pmatrix}
            1 & 0   & x_3+e_3 & \cdots & x_m+e_m \\
            0 & 1   & y_3   &   \cdots & y_m
        \end{pmatrix}.
    \]
    By the non-singularity condition, one obtains the identity $e_iy_{i+1} - e_{i+1}y_i = 0$ for $3 \le i \le m-1$. Moreover, $x_3 = -1$ and therefore $e_3 = 0$. If $e_i$ is zero for all $i$, the two fans coincide. Otherwise, $y_\ell = 0$ for some $\ell \ge 3$. {Such $\ell$ is unique since there are at most two rays parallel to the $x$-axis.} The characteristic matrix has the form
    \[
        \begin{pmatrix}
            1 & 0 & x_3 & x_4 & \cdots & x_{\ell-1} & x_\ell = -1 & x_{\ell+1}+e_{\ell+1}  & \cdots & x_m + e_m \\
            0 & 1 & y_3 & y_4 & \cdots & y_{\ell-1} & y_\ell = 0  & y_{\ell+1}             & \cdots & y_m
        \end{pmatrix}.
    \]
    From the identity $e_iy_{i+1} - e_{i+1}y_i = 0$, we have
    \[
        e_i = e_{i-1}\cdot\frac{y_{i}}{y_{i-1}} = e_{i-2}\cdot\frac{y_{i-1}}{y_{i-2}}\cdot\frac{y_{i}}{y_{i-1}} = \cdots = e_{\ell+1}\cdot \frac{y_i}{y_{\ell+1}} = -y_i e_{\ell+1}
    \]
    for $i=\ell+1, \ell+2, \dotsc, m$.
\end{proof}

    Let $\Sigma$ be a complete fan in $\R^2$ such that two rays have opposite directions and thus make a straight line $L$ passing through the origin $O$. Pick a line $L'$ parallel to $L$ and denote by $r_1,r_2,\dotsc,r_k$ the rays intersecting $L'$ at the points $p_1,p_2,\dotsc,p_k$ respectively. Therefore $r_i = \overrightarrow{Op_i}$ for all $i$. One can obtain the points $p'_1,p'_2, \dotsc, p'_k$ by translating $p_1,p_2,\dotsc,p_k$ at the same time parallel to $L$. Then a new complete fan, called a \emph{shift along $L$}, is determined by replacing $r_i$ with $\overrightarrow{Op'_i}$. Due to Proposition~\ref{prop:shift}, two complete non-singular fans with $m$ rays in $\R^2$ are shifts of each other if they are connected by an edge in $D(P_m)$. The edge will have the color $i$ or $j$ when $L = r_i \cup r_j$. A notable fact is that, up to change of basis, one does not need to distinguish at which side of the line $L$ a shift has been performed. Let us explain this (for non-singular cases). Let $\Sigma$ be a complete non-singular fan in $\R^2$ given by the characteristic matrix
    \[
        \lambda = \begin{pmatrix}
            1  & 0 & \cdots &  x_i  & \cdots & -1 & \cdots & x_j & \cdots &  x_m \\
            0  & 1 & \cdots & y_i   & \cdots & 0  & \cdots & y_j & \cdots &  y_m
        \end{pmatrix}.
    \]
    If you perform a shift at the upper half-plane, then it will have the form
    \[
        \begin{pmatrix}
            1  &  e   & \cdots &  x_i+y_ie  & \cdots & -1 & \cdots & x_j & \cdots &  x_m \\
            0  &  1   & \cdots &  y_i       & \cdots & 0  & \cdots & y_j & \cdots &  y_m
        \end{pmatrix}
    \]
    and by a row operation, it becomes
    \[
        \lambda_e = \begin{pmatrix}
            1  &  0   & \cdots &  x_i       & \cdots & -1 & \cdots & x_j-y_je & \cdots &  x_m-y_me \\
            0  &  1   & \cdots &  y_i       & \cdots & 0  & \cdots & y_j & \cdots &  y_m
        \end{pmatrix},
    \]
    which is just a shift at the lower half-plane.

    Once we have found every edge of $D(P_m)$, the next step is to find every realizable square in $D(P_m)$. By an observation of above paragraph, an immediate candidate of a realizable square goes with a line $L$ such that $L = r_1 \cup r_\ell$ is the $x$-axis. Its standard form looks like
    \[
        \begin{pmatrix}
            1  &  0  &  \cdots  &  x_i  &  \cdots  &  -1  &  0  &  x_j    &  \cdots  &    x_m \\
            0  &  0  &  \cdots  &  y_i  &  \cdots  &   0  &  0  &  y_j    &  \cdots  &    y_m \\
            -1 &  1  &  \cdots  &    0  &  \cdots  &   0  &  0  &  -y_je  &  \cdots  &  -y_me \\
            0  &  0  &  \cdots  &    0  &  \cdots  &  -1  &  1  &  -y_jf  &  \cdots  &  -y_mf
        \end{pmatrix}.
    \]
    One observes that this matrix certainly represents a realizable square
    \begin{equation}\label{eq:square}
        \xymatrix{
                \lambda \ar@{-}[rr]^1 \ar@{-}[dd]^\ell  & & \lambda_{e} \ar@{-}[dd]^\ell \\
                  &   &  \\
                \lambda_{f}\ar@{-}[rr]^1 &   & \lambda_{e-f} }.
    \end{equation}
    \begin{figure}
        \begin{tikzpicture}[scale=2]
            \draw   [color=black] (0,0) -- (1,0)
                                  (0,0) -- (0,1)
                                  (0,0) -- (0,-1)
                                  (2.5,0) -- (3.5,0)
                                  (2.5,0) -- (2.5,1)
                                  (2.5,0) -- (2.5,-1);
            \draw   [color=red] (0,0) -- (-1,1)
                                (2.5,0) -- (1.5,0);
            \draw   [color=blue] (0,0) -- (-1,0)
                                 (2.5,0) -- (1.5,-1);
        \end{tikzpicture}
        \caption{Two fans given by $\lambda$ and $\lambda''$ before and after a shift along the $y$-axis when $f<0$. The red ray must exist since $\lambda''$ can be shifted along the $x$-axis.}\label{fig:shift}
    \end{figure}
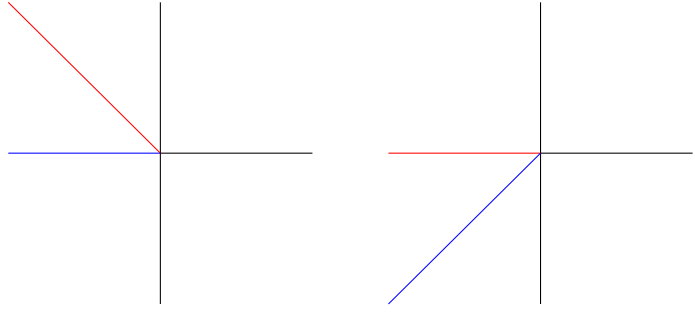
    \begin{proposition}\label{prop:square}
        Up to change of basis of $\Z^2$ and relabeling of vertices of $P_m$, every irreducible realizable squares in $D(P_m)$ has the form \eqref{eq:square} when $r_1$ and $r_\ell$ have opposite directions. Conversely, every square of the form \eqref{eq:square} is realizable whenever $r_1$ and $r_\ell$ have opposite directions. In particular, there is no irreducible realizable cube of dimension $\ge 3$ in $D(P_m)$.
    \end{proposition}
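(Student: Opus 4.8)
The plan is to dispatch the three assertions in turn: the converse (realizability of every square \eqref{eq:square}) is essentially a verification, the forward direction hinges on a single structural fact about which colour pairs can bound an irreducible square, and the statement about cubes then follows immediately. For the converse I would take the displayed $4\times(m+2)$ standard form and check directly that it is a complete non-singular fan over the double wedge $\wed_1\wed_\ell P_m$: expanding each maximal-cone determinant along the two wedge rows collapses it to the non-singularity identity $x_iy_{i+1}-x_{i+1}y_i=1$ of the corner fan $\lambda$, which holds by assumption. I would then compute the four projections—deleting each of the columns $1_1,1_2,\ell_1,\ell_2$ with its wedge row and clearing the leftover entry by a row operation, exactly as in the proof of Proposition~\ref{prop:shift}—and confirm that they return $\lambda,\lambda_e,\lambda_f,\lambda_{e-f}$ with the colours shown in \eqref{eq:square}. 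The only delicate point is the sign bookkeeping producing $e-f$ at the far corner; but this is precisely the ``the side of the line does not matter'' normalization established in the paragraph before the proposition, which I would invoke rather than reprove.

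For the forward direction I would begin with an arbitrary irreducible realizable square, with distinct edge colours $i,j$ and a corner fan $\lambda$. By Proposition~\ref{prop:shift} a colour-$i$ edge forces $r_i$ to admit an opposite ray $r_{i'}$, the edge being a shift along the line $L_i=r_i\cup r_{i'}$; likewise colour $j$ is a shift along $L_j=r_j\cup r_{j'}$. The heart of the proof, and the step I expect to be the main obstacle, is to show $L_i=L_j$, equivalently $j=i'$. Irreducibility makes the colour-$i$ edge non-trivial (shift parameter $e\neq0$), and proceeding along an adjacent edge of the square a colour-$j$ edge must issue from the shifted corner; thus $r_j,r_{j'}$ have to remain antipodal after that shift. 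Now a shift along $L_i$ displaces only the rays strictly on one side of $L_i$ (say the lower half-plane, after normalizing as above), acting there by $x_k\mapsto x_k-y_ke$ and fixing $r_i,r_{i'}$ together with every ray on the other side. If $j\neq i,i'$ then $r_j,r_{j'}$ lie off $L_i$ and split between the two half-planes, so exactly one of them is displaced; since the displaced ray has non-zero second coordinate, the pair stops being antipodal the moment $e\neq0$, and the required colour-$j$ edge cannot exist. This contradiction forces $j=i'$, so $r_i$ and $r_j$ are opposite. Relabelling $i\mapsto1,\ j\mapsto\ell$ and changing basis so that $r_1=\binom10$ and $r_2=\binom01$ (whence $r_\ell=\binom{-1}0$) turns both colours into shifts along the $x$-axis, and Proposition~\ref{prop:shift} then writes the square in the form \eqref{eq:square}.

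The non-existence of an irreducible realizable cube of dimension $\ge3$ is then immediate. Every $2$-face of such a cube is itself an irreducible realizable square, so by the forward direction its two colours index a pair of antipodal rays of the corner fan. For three pairwise distinct colours $i,j,k$ this would give $r_j=-r_i=r_k$, forcing the labels $j$ and $k$ to coincide and contradicting the distinctness of rays in a fan. Hence no three colours can bound such a configuration, and there is no irreducible realizable cube of dimension $\ge3$.
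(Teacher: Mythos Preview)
Your converse and cube arguments are fine (indeed your cube argument, deducing it from the forward direction, is cleaner than the paper's brief remark). The gap is in the heart of the forward direction.

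You argue: after a non-trivial colour-$i$ shift, the pair $r_j,r_{j'}$ ceases to be antipodal, hence ``the required colour-$j$ edge cannot exist'' at the shifted corner. The first clause is correct, but the second does not follow. Proposition~\ref{prop:shift} only says that a non-trivial colour-$j$ edge at $\lambda'$ requires \emph{some} ray of $\lambda'$ to be antipodal to its $j$-th ray; it does not say that ray must sit at the same index $j'$ as in $\lambda$. Concretely, with $L_i$ the $x$-axis and $r_j$ in the upper half-plane, a lower ray $\binom{x_k}{y_k}$ with $y_k=-y_j$ and $x_k=-x_j-y_je$ becomes exactly $-r_j$ after the shift by $e$. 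So $\lambda'$ may well admit a non-trivial colour-$j$ edge with the antipodal partner at a \emph{different} index $k\neq j'$, and your contradiction evaporates. (The paper's own Figure~\ref{fig:shift} depicts precisely this phenomenon: the ``red ray'' of $\lambda$ lands on the $x$-axis in $\lambda''$, supplying the needed antipodal direction there.)

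The paper closes this loophole by a different mechanism. Rather than arguing edge-by-edge, it writes down the full $4\times(m+2)$ standard form for the putative double-wedge fan (centred at $\lambda$, with the two extra rows determined by the colour-$p$ and colour-$q$ edges at $\lambda$) and computes all four projections explicitly. One then sees that the transition $\lambda''\to\lambda'''$ alters exactly the \emph{same set of column indices} as $\lambda\to\lambda'$ does. But Proposition~\ref{prop:shift} forces $\lambda''\to\lambda'''$ to be a shift along the $x$-axis in $\lambda''$, hence to alter precisely the columns indexing rays in the lower half-plane of $\lambda''$; and the preceding $y$-axis shift has changed which rays lie below the $x$-axis (the ray formerly at $\binom{-1}{0}$ has moved off it, while possibly another has moved onto it). This count mismatch is the actual contradiction. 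To repair your proof you need either this global standard-form computation or some other argument pinning down the index of the antipodal ray across the square; the bare antipodality of the original pair $r_j,r_{j'}$ is not enough.
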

    \begin{proof}
        {First, let us consider the last assertion. Suppose that one has a realizable cube over $\wed_{i,j,k}K$ for $1\le i<j<k \le m$. Then two of $r_i, r_j, r_k$ are not parallel and by Proposition~\ref{prop:shift}, the cube is reducible.}

        If there is a realizable square other than the form \eqref{eq:square}, then it must involve two shifts along two different lines. Up to $\R$-basis change of $\R^2$, the two lines are the $x$- and $y$-axis respectively. Let us describe a ``standard form'' for the square
        \begin{equation*}
            \xymatrix{
                    \lambda \ar@{-}[rr]^{\text{$x$-axis}} \ar@{-}[dd]^{\text{$y$-axis}}  & & \lambda' \ar@{-}[dd]^{\text{$y$-axis}} \\
                      &   &  \\
                    \lambda''\ar@{-}[rr]^{\text{$x$-axis}} &   & \lambda''' }
        \end{equation*}
        centered at $\lambda$.
        There are eight kinds of vectors of $\lambda$. The fan given by $\lambda$ must contain the rays generated by $\binom{1}{0},\binom{0}{1},\binom{-1}{0}, \binom{0}{-1}$. For simplicity of notation, we assume that the four vectors $\binom{x_i}{y_i},\binom{x_j}{y_j},\binom{x_k}{y_k},\binom{x_\ell}{y_\ell}$ are representatives of vectors of $\lambda$ in the open quadrant I, II, III, and IV respectively (you just pass it if a quadrant does not contain a ray). Therefore, the matrix for $\lambda$ is written as
        \[
            \lambda = \begin{pmatrix}
                1  &   x_i & 0  &  x_j  &   -1 & x_k   & 0  & x_\ell    \\
                0  &   y_i & 1  &  y_j  &   0  & y_k   & -1 & y_\ell
            \end{pmatrix}
        \]
        and the standard form for the square will look like
        \[
            \begin{pmatrix}
                1 & 0 &   x_i & 0 & 0 &  x_j  &   -1 & x_k   & 0  & x_\ell    \\
                0 & 0 &   y_i & 1 & 0 &  y_j  &   0  & y_k   & -1 & y_\ell    \\
                -1& 1 &   0   & 0 & 0 &   0   &   0  & -y_ke & e  & -y_\ell e \\
                0 & 0 &   0   &-1 & 1 & -x_jf &   f  & -x_kf & 0  &  0
            \end{pmatrix},
        \]
        where every variable is a real number, not necessarily an integer. One checks that
        \[
            \lambda' = \begin{pmatrix}
                1  &   x_i & 0  &  x_j  &   -1 & x_k-y_ke   & e  & x_\ell-y_\ell e    \\
                0  &   y_i & 1  &  y_j  &   0  & y_k   & -1 & y_\ell
            \end{pmatrix},
        \]
        \[
            \lambda'' = \begin{pmatrix}
                1  &   x_i & 0  &  x_j       &   -1 & x_k        & 0  & x_\ell    \\
                0  &   y_i & 1  &  y_j-x_jf  &   f  & y_k-x_kf   & -1 & y_\ell
            \end{pmatrix},
        \]
        and
        \[
            \lambda''' = \begin{pmatrix}
                1  &   x_i & 0  &  x_j       &   -1 & x_k-y_ke   & e  & x_\ell-y_\ell e    \\
                0  &   y_i & 1  &  y_j-x_jf  &   f  & y_k-x_kf   & -1 & y_\ell
            \end{pmatrix}
        \]
        by direct calculation for projections of characteristic maps (see \cite{CP13} and \cite{CP15}). Note that $x_k,y_k,x_\ell,y_\ell,$ and $e $ are all nonzero. Therefore, the two shifts along $x$-axis corresponding to the edges $\overline{\lambda\lambda'}$ and $\overline{\lambda''\lambda'''}$ move the same number of rays. But this is impossible since $\lambda$ and $\lambda''$ have different number of rays in the lower half-plane. See Figure~\ref{fig:shift}.
    \end{proof}

\section{The Projectivity: a proof of Theorem~\ref{thm:cpisproj}} \label{sec:3}
The goal of this section is to show the following:
\begin{theorem}[Theorem~\ref{thm:cpisproj}]
    Every toric manifold over $P_m(J)$ is projective for any $m\ge 3$ and an $m$-tuple $J \in \Z_+^m$.
\end{theorem}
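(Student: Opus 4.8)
The plan is to produce, for the fan $\Sigma$ underlying an arbitrary toric manifold $M$ over $P_m(J)$, an explicit Shephard diagram \cite{She71}, i.e.\ a point witnessing that $\Sigma$ is the normal fan of a polytope. First I would record the numerology that makes the argument uniform. If $N=\sum_i j_i$ is the number of rays, then $\Sigma$ lives in $\R^{2+N-m}$, so its Picard number, and hence the dimension of the ambient space of the Gale dual, is $N-(2+N-m)=m-2$ independently of $J$. Thus every member of the family has a Shephard diagram living in the \emph{fixed} space $\R^{m-2}$, and projectivity amounts to the nonemptiness of $\bigcap_{\sigma}\relint\pos\{\bar u_i : i\notin\sigma\}$, the intersection running over the maximal cones $\sigma$ and $\bar u_i$ denoting the Gale dual of the ray $u_i$.

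Next I would invoke the classification of Section~\ref{sec:2}: by \cite[Theorem~5.4]{CP15}, $M$ corresponds to a realizable puzzle $p\colon G(J)\to D'(P_m)$, so the entire combinatorial content of $\Sigma$ is encoded by the image of $p$ together with the realizable squares it uses. Here the two structural results do the heavy lifting. The vertices of the puzzle are complete non-singular fans in $\R^2$, each projective by Section~\ref{sec:1}, so each carries a Shephard diagram in $\R^{m-2}$; the edges are shifts (Proposition~\ref{prop:shift}), and since the shift parameter enters the characteristic matrix $\lambda_e$ \emph{affinely} in $e$, the induced motion of the Gale dual is affine; finally, by Proposition~\ref{prop:square} every irreducible realizable square has the normal form \eqref{eq:square}, attached to a single line $L=r_1\cup r_\ell$ of opposite rays, and there is \emph{no} irreducible cube of dimension $\ge 3$. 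I would use the last fact to split the puzzle into its reducible part, which corresponds to canonical extensions \cite{Ewa86} and preserves projectivity (it amounts to bundle-type constructions over lower strata), and its irreducible part, which by the absence of higher cubes is concentrated on mutually independent two-dimensional square blocks.

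I would then assemble a single witness $\omega\in\R^{m-2}$. Each square block imposes, through \eqref{eq:square}, a family of linear inequalities on $\omega$ that is exactly the projectivity condition for a two-dimensional fan, hence solvable; the affineness of the shift maps lets me track these inequalities as the shift parameters vary, while the canonical-extension directions impose inequalities that are consistent by construction. The main obstacle is precisely the gluing: although every block and every projection is individually projective (so Question~\ref{que:projectiveprojection} is genuinely delicate in general, and even fails for orbifolds by Example~7.1 of \cite{CP13}), I must rule out a global frustration among the blockwise inequalities. This is where Proposition~\ref{prop:square} is indispensable: the non-existence of irreducible cubes of dimension $\ge 3$ means that no three shift directions interact simultaneously, so the inequality system carries no cyclic obstruction and the blockwise solutions extend to a common $\omega$. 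Verifying that this local-to-global compatibility really holds for the specific opposite-ray line structure of $P_m$ is the technical heart of the proof.
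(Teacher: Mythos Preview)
Your setup is sound: the Picard number is indeed $m-2$ for every $J$, the reduction to an irreducible puzzle via canonical extensions is legitimate, and Proposition~\ref{prop:square} is the right structural input. But you have misread what that proposition gives you. An irreducible puzzle over $P_m$ is not a collection of ``mutually independent two-dimensional square blocks'': since every irreducible realizable square is attached to a \emph{single} pair $\{1,\ell\}$ of opposite rays, the irreducible case forces $j_i=1$ for all $i\notin\{1,\ell\}$, and the whole puzzle is one $\Delta^{j_1-1}\times\Delta^{j_\ell-1}$ grid of plane fans obtained by shifts along the one line $L=r_1\cup r_\ell$. There is nothing to glue across independent blocks; the entire obstruction already lives in this single block, and the absence of higher cubes only tells you that you are in this situation, not that the obstruction vanishes.

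The substantive gap is your last paragraph. Saying ``no three shift directions interact, hence no cyclic obstruction'' is not an argument: within the single $(j_1,j_\ell)$ block you still have $j_1j_\ell$ projections, each individually projective, and you must exhibit a common point in $S(\widehat\Sigma)$. The paper does this by an explicit geometric analysis of the Shephard diagram that your proposal does not contain. One shows that the points $\{\widehat\alpha\}_{1<\alpha<\ell}\cup\{\widehat\beta\}_{\ell<\beta\le m}$ span a hyperplane $H$ and determine a unique Radon point $R\in H$ (Lemma~\ref{lem:radon}); that every coface of each plane projection contains a germ $B_\epsilon(R)\cap\widehat 1\,\widehat\ell\,R$ (Proposition~\ref{prop:smalltriangle}); and, crucially, that \emph{all} the points $\widehat 1_1,\dotsc,\widehat 1_{j_1},\widehat\ell_1,\dotsc,\widehat\ell_{j_\ell}$ lie in a single $2$-dimensional half-plane through $R$, with the $\widehat 1$-rays and the $\widehat\ell$-rays angularly non-interleaved (Lemma~\ref{lem:halfplane} and Proposition~\ref{prop:halfplane}). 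This last fact is what replaces your ``no cyclic obstruction'': it guarantees an innermost triangle $\widehat 1_j\,\widehat\ell_p\,R$ whose germ at $R$ lies in every coface, so $S(\widehat\Sigma)\ne\varnothing$. Your affineness-of-shifts remark is on the right track toward Lemma~\ref{lem:halfplane}, but you have not carried out the computation that produces the affine relation \eqref{eq:1ellplane}, nor extracted the non-interleaving from the signs of its coefficients; without these, the proof is incomplete.
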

The main tool used here is the Shephard diagram for wedges \cite{CP13} (refer \cite{She71}, \cite{Ewa86}, \cite{Ewa96} for details of Shephard diagrams). First, let us define the Shephard diagram of a complete simplicial fan.

\begin{lemma}\label{lem:sumzero}
    Let $\Sigma$ be a fan in $\R^n$ with $m$ rays. If $\Sigma$ is complete, then there is a set of nonzero vectors $\{u_1,\dotsc,u_m\}$ such that $u_i$ is on the $i$th ray for $i=1,\dotsc,m$ and $u_1+\dotsb+u_m = 0$.
\end{lemma}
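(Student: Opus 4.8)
The plan is to take each $u_i$ to be a positive rescaling $u_i = a_i v_i$ of the primitive generator $v_i$ of the $i$th ray, so that the whole problem reduces to finding strictly positive reals $a_1,\dotsc,a_m$ with $a_1 v_1 + \dotsb + a_m v_m = 0$. With $a_i > 0$ each $u_i$ is automatically nonzero and lies on the $i$th ray, so producing such coefficients is all that is needed.

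First I would extract from completeness the fact that the rays positively span $\R^n$. Since the cones of $\Sigma$ cover $\R^n$, every vector lies in some cone and is therefore a nonnegative combination of the generators of the rays of that cone; in particular $\pos\{v_1,\dotsc,v_m\} = \R^n$. Applying this to each of the vectors $-v_j$, I obtain nonnegative reals $b_{ij} \ge 0$ with $-v_j = \sum_{i=1}^m b_{ij} v_i$ for every $j$.

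The crucial step is to pass from these $m$ separate relations to a single relation that uses \emph{every} ray with a strictly positive coefficient. Rewriting the $j$th relation as $v_j + \sum_{i} b_{ij} v_i = 0$ and summing over all $j$ gives $\sum_{i} \bigl(1 + \sum_{j} b_{ij}\bigr) v_i = 0$. Setting $a_i = 1 + \sum_{j} b_{ij}$, each coefficient satisfies $a_i \ge 1 > 0$, and the vectors $u_i = a_i v_i$ are exactly the ones claimed.

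The only delicate point is securing the strict positivity of every coefficient: the averaging-over-$j$ device is precisely what converts the ``each $-v_j$ is nonnegatively spanned'' data into one relation involving all $m$ rays. Equivalently one could invoke Stiemke's lemma, since the nonexistence of a strictly positive null combination would force all $v_i$ into a closed half-space with at least one of them in its interior, contradicting $\pos\{v_1,\dotsc,v_m\} = \R^n$; but the summation argument above is self-contained and avoids any appeal to a theorem of the alternative.
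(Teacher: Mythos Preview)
Your proof is correct and is essentially the same argument as the paper's: the paper starts from $\sum_i w_i + \sum_i(-w_i)=0$, expresses each $-w_i$ as a nonnegative combination of the $w_k$ using completeness, and then gathers terms, which is exactly your summation-over-$j$ step yielding coefficients $1+\sum_j b_{ij}\ge 1$. Your write-up is a bit more explicit about why the resulting coefficients are strictly positive, but the method is identical.
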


\begin{proof}
    There are many possible proofs of this basic fact. One of them is as follows. Pick $m$ vectors $w_1,\dotsc,w_m$ so that $w_i$ generates $r_i$ for each $i$. Then we have
    \[
        w_1 + w_2 + \dotsb + w_m + (-w_1) + (-w_2) + \dotsb + (-w_m) = 0.
    \]
    Note that every vector $-w_i$ is in some cone in $\Sigma$ and thus each of them can be written as a linear combination of $w_1,\dotsc,w_m$ with nonnegative coefficients. Gathering all of them gives the wanted result.
\end{proof}

Let $X = (u_1,\dotsc,u_m)$ be a sequence of vectors of the above lemma. The matrix
\[
    \begin{pmatrix}
        & & \\
        u_1 & \dotsb & u_m \\
        & & \\
    \end{pmatrix}_{n \times m}
\]
has rank $n$ and thus there exists a sequence of vectors $\{\widehat{u}_i\}_{1\le i \le m}\subset \R^{m-n-1}$ such that the set $\{(\widehat{u}_i,1)\mid 1\le i \le m\}$ is linearly independent and
\[
    \begin{pmatrix}
        & & \\
        u_1 & \dotsb & u_m \\
        & & \\
    \end{pmatrix}_{n \times m}
    \left(
        \begin{array}{ccc|c}
            & \widehat{u}_1 & & 1 \\
            & \vdots & & \vdots \\
            & \widehat{u}_m & & 1
        \end{array}
    \right)_{m\times (m-n)} = O.
\]%
%
%
%

\begin{definition}\label{def:shephard}
    The sequence $\widehat{\Sigma}:=\{\widehat{u}_i\}$ is called a {Shephard diagram} of the fan $\Sigma$.
\end{definition}

\begin{remark}
    The Shephard diagram is the inverse transform of the famous Gale transform. Beware that a Shephard diagram is not uniquely determined.
\end{remark}
\begin{remark}
    We frequently use the abbreviation $\widehat i = \widehat u_i$ when $u_i$ is a generator of the ray $r_i$ of $\Sigma$.
\end{remark}
\begin{example}(\cite{CP13})\label{exa:shephard}
    This example is presented to describe an explicit calculation of a Shephard diagram. Since every toric manifold over the pentagon is a blow-up of a Hirzebruch surface, its characteristic map is given by the following (up to basis change of $\Z^2$)
    \[
        \lambda_d=\begin{pmatrix}
            1   &   0   &   -1  &   -1  &   d\\
            0   &   1   &   1   &   0   &   -1
        \end{pmatrix}.
    \]
    Suppose that $d\ge 0$. To make the sum of column vectors zero, we multiply a suitable positive real number to each column of $\lambda_d$, resulting
    \[
        A=\begin{pmatrix}
            2   &   0   &   -1  &   -2d-1  &   2d\\
            0   &   1   &   1   &   0   &   -2
        \end{pmatrix}
    \]
    and we find a matrix $B$ of maximal rank which contains the column $(1,1,1,1,1)^T$ such that $AB=O$, for example
    \[
        \begin{pmatrix}
            2   &   0   &   -1  &   -2d-1  &   2d\\
            0   &   1   &   1   &   0   &   -2
        \end{pmatrix}
        \left(\begin{array}{cc|c}
            1 & -d & 1 \\
            -2& 2  & 1 \\
            2 & 0  & 1 \\
            0 & 0  & 1 \\
            0 & 1  & 1
        \end{array}\right) = O.
    \]
    Therefore we obtain $\widehat{\Sigma} = \{\widehat{1},\widehat{2},\widehat{3},\widehat{4}, \widehat{5} \} = \{(1,-d),(-2,2),(2,0),(0,0),(0,1) \}$ when we abbreviate $\widehat{i} = \widehat{u}_i$.
\end{example}

Before introducing Shephard's criterion for projectivity, we need to explain some definitions and notions. A \emph{relative interior} of a subset $A$ in $\R^n$, denoted by $\relint A$, is defined to be the interior of $A$ in the minimal affine space containing $A$.
\begin{definition}
A simplicial fan $\Sigma$ is called \emph{strongly polytopal} if there is a simplicial polytope $P$ such that $0\in\relint P$ and each cone of $\Sigma$ is spanned by a proper face of $P$ and vice versa.
\end{definition}
It is a well-known fact that a toric variety is projective if and only if its corresponding fan is strongly polytopal. A non-singular projective toric variety is also called a \emph{projective toric manifold}.

Let $\widehat{\Sigma}$ be a Shephard diagram of a fan and $C\in \Sigma$ a cone which is a face of $\Sigma$. A \emph{coface} $\widehat{C}$ of $\Sigma$ is defined to be
\[
    \widehat{C} := \relint \conv \{\widehat{u}_i \mid u_i\text{ does not generate a ray of }C\}_{1\le i \le m}.
\]
The following theorem is a key to check whether a toric manifold is projective or not.

\begin{theorem}[Shephard's criterion]\cite{She71,Ewa86}\label{thm:shephard}
    A complete simplicial fan $\Sigma$ is strongly polytopal if and only if
    \[
        S(\widehat{\Sigma}) := \bigcap_{C\in\Sigma}\widehat{C} \ne\varnothing.
    \]
\end{theorem}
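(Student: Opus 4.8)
The plan is to set up an explicit dictionary between positive rescalings of the ray generators (the polytopal side) and points of the Shephard diagram (the Gale side), and then to read the facet inequalities of a simplicial polytope realizing $\Sigma$ directly off the coface conditions. Write $A$ for the $n\times m$ matrix with columns $u_i$ and $B$ for the $m\times(m-n)$ matrix whose rows are $(\widehat u_i,1)$, so that $AB=O$ and the columns of $B$ span $\ker A$. The first thing I would record is a duality: a vector $(\delta_k)\in\R^m$ satisfies $\sum_k\delta_k(\widehat u_k,1)=0$ precisely when it is orthogonal to every column of $B$, i.e. when it lies in the row space of $A$, so that $\delta_k=\langle z,u_k\rangle$ for some $z\in\R^n$. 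Combined with $u_1+\dots+u_m=0$ (Lemma~\ref{lem:sumzero}), this shows that once a point $y\in\R^{m-n-1}$ is written as an affine combination $y=\sum_k\nu_k\widehat u_k$ with $\sum_k\nu_k=1$, then \emph{all} such expressions are exactly
\[
    \nu_k(z)=\nu_k+\langle z,u_k\rangle,\qquad z\in\R^n .
\]
This parametrization is the engine of the whole argument.

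With this in hand the dictionary reads as follows. Fix a maximal cone $C$ and let $I_C=\{i : r_i\subset C\}$; since $\Sigma$ is complete and simplicial, $\{u_i\mid i\in I_C\}$ is a basis of $\R^n$. Given positive reals $h_i$ normalized by $\sum_i h_i=1$, set $y=\sum_k h_k\widehat u_k$ and $\lambda_i=1/h_i$, and let $a_C$ be determined by $\langle a_C,u_i\rangle=h_i$ for $i\in I_C$. A direct check gives $a_C=-z_C$, where $z_C$ is the unique vector with $\nu_i(z_C)=0$ for all $i\in I_C$; consequently, for every $j\notin I_C$,
\[
    \nu_j(z_C)>0\iff \langle a_C,\lambda_j u_j\rangle<1 .
\]
The right-hand inequality says exactly that the vertex $v_j=\lambda_j u_j$ lies strictly on the origin side of the facet hyperplane $\{\langle a_C,x\rangle=1\}$ through $\{v_i\mid i\in I_C\}$, while the left-hand inequality, by the parametrization above, is precisely the condition $y\in\widehat C$.

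These two equivalences drive both implications. For the forward direction I would start from a simplicial polytope $P$ with $0\in\relint P$ whose proper faces span the cones of $\Sigma$, take its vertices to be $v_i=\lambda_i u_i$, and put $h_i=1/\lambda_i$ and $y=\sum_k h_k\widehat u_k$ after rescaling so the $h_i$ sum to $1$. The facet inequalities of $P$ then give $y\in\widehat C$ for every maximal $C$; to promote this to $y\in\widehat{C'}$ for a lower face $C'$, I would perturb $z_C$ (for a maximal $C\supseteq C'$) in a direction that keeps $\nu_k=0$ on $I_{C'}$ while lifting the coordinates on $I_C\setminus I_{C'}$ to be positive, which is possible because $\{u_i\mid i\in I_C\}$ is a basis and the remaining coordinates stay positive by continuity. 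Hence $y\in S(\widehat\Sigma)$. Conversely, given $y\in S(\widehat\Sigma)$, membership in the coface of the trivial cone $\{0\}$ yields an expression $y=\sum_k h_k\widehat u_k$ with every $h_k>0$, which I normalize and use as the rescaling $\lambda_i=1/h_i$; membership in $\widehat C$ for each maximal $C$ then delivers, through the displayed equivalence, all of the facet inequalities, so the $v_i=\lambda_i u_i$ are the vertices of a simplicial polytope whose cones are spanned by its proper faces and which contains $O$ in its interior, as every facet evaluates to $0<1$ at the origin.

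The main obstacle I anticipate is not either implication of the dictionary but the bookkeeping around relative interiors. Cofaces are defined as relative interiors of convex hulls, so I must consistently use that $\relint\conv\{p_k\}$ is exactly the set of strictly positive convex combinations of the $p_k$, and I must treat all faces of $\Sigma$ rather than only the maximal ones. The perturbation step in the forward direction is precisely where this care is needed; verifying that it produces a genuinely strictly positive combination supported off $I_{C'}$, together with checking that the trivial cone $\{0\}$ supplies the positivity $h_k>0$ used in the converse, is the most delicate part of the write-up.
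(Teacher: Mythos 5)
The paper offers no proof of this theorem: it is quoted verbatim from Shephard \cite{She71} and Ewald \cite{Ewa86}, so there is no internal argument to compare against. Your proposal is, in substance, a correct reconstruction of the classical Gale--duality proof (essentially the one in Ewald's book, Section II.4). I checked the key mechanisms and they work: since the columns of $B$ span $\ker A$ and $u_1+\dotsb+u_m=0$, the affine representations of a point $y$ are exactly $\nu_k(z)=\nu_k+\langle z,u_k\rangle$; your sign bookkeeping is consistent ($\langle a_C,u_i\rangle=h_i$ on $I_C$ forces $a_C=-z_C$, and $\nu_j(z_C)>0$ iff $\langle a_C,\lambda_j u_j\rangle<1$); the identification of $y\in\widehat C$ with positivity of $\nu_j(z_C)$ uses correctly that $\relint\conv$ of a finite set is the set of strictly positive convex combinations and that $z_C$ is the unique parameter killing the coordinates on $I_C$; and your perturbation $z=z_C+\epsilon w$ with $\langle w,u_i\rangle=0$ on $I_{C'}$ and $=1$ on $I_C\setminus I_{C'}$ does reduce all lower-dimensional cofaces (including the trivial cone) to the maximal ones --- this incidentally justifies the paper's remark, in the example after the theorem, that only cofaces of maximal cones need to be intersected.

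Two steps deserve more than the single clause you give them. In the forward direction you should note why each ray carries exactly one vertex of $P$ (a vertex $tv'$, $0<t<1$, on the same ray as a vertex $v'$ would be a nontrivial convex combination of $v'$ and $0\in\operatorname{int}P$), so that writing the vertices as $v_i=\lambda_i u_i$ is legitimate. More importantly, in the converse the facet inequalities alone only show that each $F_C=\conv\{v_i\mid i\in I_C\}$ is a face of $P=\conv\{v_1,\dotsc,v_m\}$ lying in the hyperplane $\{\langle a_C,x\rangle=1\}$; to conclude that $P$ is simplicial with face fan exactly $\Sigma$ you still need the radial covering argument from completeness: the $F_C$ are $(n-1)$-simplices with $C=\pos F_C$, their union meets every ray from the origin, hence $0\in\operatorname{int}P$ and every boundary point of $P$ coincides with the unique radial intersection point lying in some $F_C$, so $\partial P=\bigcup_C F_C$ and the cones over proper faces of $P$ are precisely the cones of $\Sigma$. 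This is routine but is genuinely part of the proof and should appear in the write-up; with it, your plan is complete.
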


\begin{convention}
    When there is no danger of confusion, we denote an open convex polytope by its vertices. For example,
    \[
        \relint \conv \{\widehat u_1,\widehat u_2, \widehat u_3\} = \widehat u_1\widehat u_2 \widehat u_3 = \widehat 1 \widehat 2 \widehat 3.
    \]
\end{convention}
\begin{example}
    Let $\widehat{\Sigma}$ be the Shephard diagram of Example~\ref{exa:shephard}. Using the above theorem, we can check whether $\Sigma$ is strongly polytopal or not. 
    Note that, to compute $S(\widehat{\Sigma})$, one needs to consider only cofaces of maximal cones. Therefore, $S(\widehat{\Sigma}) = \widehat{3}\widehat{4}\widehat{5} \cap \widehat{4}\widehat{5}\widehat{1} \cap \widehat{5}\widehat{1}\widehat{2} \cap \widehat{1}\widehat{2}\widehat{3} \cap \widehat{2}\widehat{3}\widehat{4}$ and it is the colored region of Figure~\ref{fig:shephard}. This is nonempty for every $d\ge 0$ and we conclude that $\Sigma$ is strongly polytopal for all $d\ge 0$.
    \begin{figure}
        \begin{tikzpicture}[scale = 1]
            \draw [help lines] (-2.5,-2.5) grid (2.5,2.5);
            \draw [thin, ->] (-2.5,0) -- (2.5,0);
            \draw [thin, ->] (0,-2.5) -- (0,2.5);
            \fill [red] (0,0) -- (0.33,0) -- (0,1);
            \coordinate [label = right:{$\widehat{1}$}] (1) at (1,-2);
            \coordinate [label = left :{$\widehat{2}$}] (2) at (-2,2);
            \coordinate [label = above:{$\widehat{3}$}] (3) at (2,0);
            \coordinate [label = left :{$\widehat{4}$}] (4) at (0,0);
            \coordinate [label = above right:{$\widehat{5}$}] (5) at (0,1);
            \draw [ultra thick, miter limit=2] (1)--(2)--(3)--(4)--(5)--cycle;
        \end{tikzpicture}
        \caption{A Shephard diagram of $\Sigma$ when $d=2$.}\label{fig:shephard}
    \end{figure}
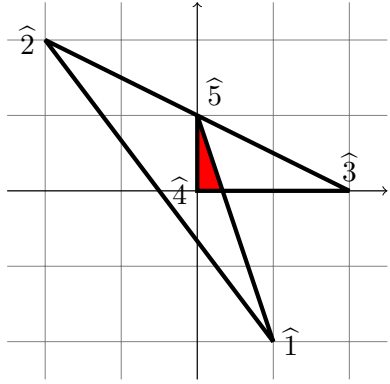
\end{example}

\begin{proposition}\cite[Proposition~5.9]{CP13}\label{prop:shephardofwedge}
    Let $K$ be a star-shaped simplicial complex with vertex set $[m]$, $\Sigma$ a fan over $\wed_1 K$, and $\Sigma_1, \Sigma_2$ the fans over $K$ obtained by projections from $\Sigma$. If $$\widehat{\Sigma} = \{\widehat{1}_1, \widehat{1}_2, \widehat{2}, \dotsc, \widehat{m}\}$$ is a Shephard diagram of $\Sigma$, then $$ \widehat{\Sigma}_1 = \{\widehat{1}_2, \widehat{2}, \dotsc, \widehat{m} \}$$ and $$ \widehat{\Sigma}_2 = \{\widehat{1}_1, \widehat{2}, \dotsc, \widehat{m} \}$$ are Shephard diagrams of $\Sigma_1$ and $\Sigma_2$. Moreover,
    \[
        S(\widehat{\Sigma}) = S(\widehat{\Sigma}_1) \cap S(\widehat{\Sigma}_2).
    \]
\end{proposition}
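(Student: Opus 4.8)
The plan is to split the statement into two parts: first the identification of $\widehat\Sigma_1$ and $\widehat\Sigma_2$ as sub-diagrams of $\widehat\Sigma$, which is a linear-algebra fact about null spaces, and then the intersection formula $S(\widehat\Sigma)=S(\widehat\Sigma_1)\cap S(\widehat\Sigma_2)$, which is combinatorial bookkeeping of cofaces indexed by the facets of $\wed_1 K$.

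For the first part I would work directly with the defining equation $UW=O$ from Definition~\ref{def:shephard}, where $U=(u_{1_1}\,|\,u_{1_2}\,|\,u_2\,|\,\cdots\,|\,u_m)$ collects the ray generators of $\Sigma$ and the columns of $W$, whose rows are the $(\widehat u_i,1)$, form a basis of $\ker U$ containing the all-ones vector. After a real basis change I may assume that $u_{1_1}$ is the last coordinate vector; collapsing the ray $r_{1_1}$ then amounts to deleting the last row of $U$ and the column $1_1$, and the resulting matrix $U_1$ is exactly a generator matrix of the projection $\Sigma_1$ (its columns still sum to zero, since $u_{1_1}$ contributes nothing to the first $n$ coordinates). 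The key observation is that the map $\ker U\to\ker U_1$ forgetting the $1_1$-coordinate is a linear isomorphism: the first $n$ rows of $UW=O$ say precisely that forgetting that coordinate lands in $\ker U_1$, while the last row recovers the $1_1$-coordinate uniquely. Hence deleting the $1_1$-row of $W$ produces a full-rank basis matrix of $\ker U_1$ still containing the all-ones vector, so its rows read off $\widehat\Sigma_1=\{\widehat 1_2,\widehat 2,\dots,\widehat m\}$. The same argument applied to $u_{1_2}$ gives $\widehat\Sigma_2=\{\widehat 1_1,\widehat 2,\dots,\widehat m\}$.

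For the intersection formula I would first recall the reduction noted in the example following Theorem~\ref{thm:shephard}, that $S$ is the intersection of the cofaces of the maximal cones only. I would then classify the facets of $\wed_1 K$ into three families: those of the form $\{1_1,1_2\}\cup\tau$ with $\{1\}\cup\tau$ a facet of $K$, and those of the form $\{1_1\}\cup\rho$ or $\{1_2\}\cup\rho$ with $\rho$ a facet of $K$ not containing $1$. Taking complements in $\{1_1,1_2,2,\dots,m\}$ and applying $\widehat{(\ )}$, the first family yields cofaces $\relint\conv\{\widehat i : i\in\{2,\dots,m\}\setminus\tau\}$, which involve neither $\widehat 1_1$ nor $\widehat 1_2$; the family $\{1_1\}\cup\rho$ yields cofaces containing $\widehat 1_2$, and $\{1_2\}\cup\rho$ yields cofaces containing $\widehat 1_1$. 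Matching these against the cofaces computed in the diagrams $\widehat\Sigma_1$ and $\widehat\Sigma_2$ (using the identification from the first part, under which the surviving wedge vertex plays the role of vertex $1$ of $K$), the first family is common to $S(\widehat\Sigma_1)$ and $S(\widehat\Sigma_2)$, the second contributes exactly to $S(\widehat\Sigma_1)$, and the third exactly to $S(\widehat\Sigma_2)$. Intersecting, the shared first family is idempotent and the formula $S(\widehat\Sigma)=S(\widehat\Sigma_1)\cap S(\widehat\Sigma_2)$ falls out.

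I expect the main obstacle to be the bookkeeping in this last step: one must get the facet classification of $\wed_1 K$ exactly right and, in particular, track carefully which copy of the wedge vertex survives in each projection, so that the coface containing $\widehat 1_2$ is matched with the facet $\{1_1\}\cup\rho$ and not $\{1_2\}\cup\rho$. A secondary point needing care is the justification that only maximal cones contribute to $S$, since relative interiors are not monotone under inclusion of convex hulls; for this I would invoke the cited form of Shephard's criterion rather than reprove it.
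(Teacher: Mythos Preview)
The paper does not prove this proposition at all: it is quoted from \cite[Proposition~5.9]{CP13} and used as a black box, so there is no in-paper argument to compare your proposal against.

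That said, your two-step plan is a correct and standard way to establish the result. The linear-algebra half is sound: after an $\R$-basis change putting $u_{1_1}=e_{n+1}$ (which leaves $\ker U$ and hence the Shephard diagram unchanged), forgetting the $1_1$-row really does give a linear isomorphism $\ker U\to\ker U_1$ carrying the all-ones vector to the all-ones vector, so the rows $(\widehat u_i,1)$ with $i\ne 1_1$ form a valid Shephard matrix for $\Sigma_1$. One small point you glide over is that the projected generators are nonzero; this follows because in $\wed_1 K$ the vertex $1_1$ is joined to every other vertex, so no $u_i$ can be antipodal to $u_{1_1}$ in a simplicial fan. Your combinatorial half is also correct: the trichotomy of facets of $\wed_1 K$ into $\{1_1,1_2\}\cup\tau$, $\{1_1\}\cup\rho$, and $\{1_2\}\cup\rho$ is exactly the standard description, and matching cofaces against those of $K$ (with the surviving wedge vertex playing the role of $1$) gives the intersection formula immediately. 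Your caution about which copy of $1$ survives in which projection is warranted but you have it right.
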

This fact can certainly be generalized for fans over $K(J)$ in an obvious way.

Let us given a complete non-singular fan $\Sigma$ over $P_m(J)$ when $J = (j_1,\dotsc,j_m)$. For the purpose to show projectivity, we can assume that the corresponding puzzle is irreducible. If not, it is a canonical extension and we can consider its projection without changing its Shephard diagram. By Proposition~\ref{prop:square}, we can assume that $r_1$ and $r_\ell$ are of opposite directions and $j_i = 1$ for $i\ne 1$ and $i \ne \ell$. In this setting, we obtain a Shephard diagram
\[
    \widehat{\Sigma} = \{\widehat{1}_1,\dotsc,\widehat{1}_{j_1},\widehat{\ell}_1,\dotsc, \widehat{\ell}_{j_\ell}\} \cup \{\widehat{\alpha} \mid 1< \alpha <\ell\} \cup \{\widehat{\beta} \mid \ell < \beta \le m\}\subset\R^{m-3}.
\]
We put $A := \{ {\alpha} \mid 1< \alpha <\ell\}$ and $B:=\{ {\beta} \mid \ell < \beta \le m\}$. First, we consider the case $j_1= j_\ell = 1$. Then $\Sigma$ is just a fan over the plane. In this case we write $1_1 = 1$ and $\ell_1 = \ell$.

\begin{lemma}\label{lem:cofaceisasimplex}
    If $C$ is a maximal cone of $\Sigma$ in $\R^2$, then the coface $\widehat{C}\subset \R^{m-3}$ is an open $(m-3)$-simplex.
\end{lemma}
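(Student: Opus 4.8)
The plan is to reduce the statement to a single affine-independence claim. Since $C$ is a maximal (hence two-dimensional) cone of the planar fan $\Sigma$, it is spanned by two of the rays, say $r_i$ and $r_{i+1}$, and its coface is $\widehat C=\relint\conv\{\widehat u_k\mid k\ne i,\,i+1\}$, the relative interior of the convex hull of the remaining $m-2$ points. An $(m-3)$-simplex has exactly $m-2$ vertices, so $\widehat C$ is an open $(m-3)$-simplex precisely when these $m-2$ points are affinely independent in $\R^{m-3}$. Thus the whole lemma comes down to showing that $\{\widehat u_k\mid k\ne i,\,i+1\}$ is affinely independent, and the only feature of $C$ I will use is that its two generators $u_i,u_{i+1}$ are \emph{linearly independent} in $\R^2$, which holds because $C$ is two-dimensional.

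First I would describe the space of all affine dependences among $\widehat u_1,\dots,\widehat u_m$. Write $U$ for the $2\times m$ matrix with columns $u_k$ and $M$ for the $m\times(m-2)$ matrix of Definition~\ref{def:shephard}, whose $k$th row is $(\widehat u_k,1)$ and whose $m-2$ columns, by the non-degeneracy built into the Shephard diagram together with $UM=O$, form a basis of $\ker U$. A vector $c=(c_1,\dots,c_m)$ encodes an affine dependence $\sum_k c_k\widehat u_k=0$ with $\sum_k c_k=0$ exactly when $M^{\top}c=0$. Since the columns of $M$ span $\ker U$, one has $\ker M^{\top}=(\ker U)^{\perp}=\row U$, so the affine dependences are precisely the vectors of the form $c_k=\langle w,u_k\rangle$ for some $w\in\R^2$.

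Next I would feed in the cone $C$. If $\{\widehat u_k\mid k\ne i,\,i+1\}$ were affinely dependent, there would be a nonzero affine dependence $c$ with $c_i=c_{i+1}=0$. By the previous step $c_k=\langle w,u_k\rangle$ for some $w$, and the two vanishing conditions read $w\perp u_i$ and $w\perp u_{i+1}$. As $u_i$ and $u_{i+1}$ are linearly independent, this forces $w=0$ and hence $c=0$, a contradiction. Therefore the $m-2$ points are affinely independent, and $\widehat C$ is an open $(m-3)$-simplex.

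The bulk of the work is the routine linear-algebra bookkeeping, and the one step to handle with care is the identification in the second paragraph, where the defining relation $UM=O$ (with the rank condition) must be translated into the clean description of affine dependences as $\{c_k=\langle w,u_k\rangle\}$; this is exactly the Gale/Shephard duality specialized to the present situation. Once that identification is in place, the conclusion follows from the two-line orthogonality argument above, so I do not anticipate any genuine obstacle beyond verifying the rank and orthogonality claims precisely.
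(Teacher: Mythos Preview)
Your argument is correct. The identification of the affine dependences among the $\widehat u_k$ with the row space of $U$ is exactly the Gale/Shephard duality, and once that is in hand the orthogonality step against the two linearly independent generators $u_i,u_{i+1}$ of the maximal cone $C$ cleanly forces any dependence supported off $\{i,i+1\}$ to vanish.

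The paper does not give an argument of its own here: it simply invokes Lemma~II.4.10 of Ewald's book \cite{Ewa96}, which is precisely the general statement that cofaces in a Shephard diagram of a simplicial fan are open simplices of the expected dimension. Your write-up therefore unpacks what the citation encapsulates. The advantage of your version is that it is self-contained and makes transparent \emph{why} the two-dimensionality of $C$ is the only hypothesis needed; the advantage of the paper's approach is brevity, at the cost of sending the reader to an external reference. The underlying linear algebra is the same in both.
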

\begin{proof}
    This is a direct consequence of Lemma~II.4.10. of \cite{Ewa96}.
\end{proof}

\begin{lemma}\label{lem:radon}
    In $\R^{m-3}$, the set $\{\widehat\alpha\}_{\alpha\in A} \cup \{\widehat\beta\}_{\beta\in B}$ affinely spans an $(m-4)$-dimensional hyperplane $H$. The sets $\relint \conv \{\widehat{\alpha}\}$ and $\relint\conv\{\widehat\beta\} $ intersect at exactly one point, say $R\in \R^{m-3}$.
\end{lemma}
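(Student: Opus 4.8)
The plan is to derive everything from the single structural identity underlying the Shephard diagram, namely the duality between affine dependences among the $\widehat u_i$ and linear functionals evaluated on the $u_i$. Writing $U=(u_1\mid\cdots\mid u_m)$ for the $2\times m$ matrix of the vectors produced by Lemma~\ref{lem:sumzero} and $V$ for the $m\times(m-2)$ matrix whose rows are $(\widehat u_i,1)$, the defining equation $UV=O$ together with $\rank V=m-2$ says precisely that the columns of $V$ form a basis of $\ker U$. Dualizing, a vector $c=(c_1,\dots,c_m)$ satisfies $\sum_i c_i\widehat u_i=0$ and $\sum_i c_i=0$ (that is, $c^{\mathsf T}V=0$) if and only if $c\in(\ker U)^\perp=\row U$, i.e. $c_i=a\cdot u_i$ for some $a\in\R^2$. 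Since the rays of the complete fan $\Sigma$ span $\R^2$, the assignment $a\mapsto(a\cdot u_i)_i$ is injective, so the affine dependences among $\{\widehat u_1,\dots,\widehat u_m\}$ form exactly a $2$-dimensional space. I will use this dictionary throughout.

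For the dimension statement I would normalize the basis of $\Z^2$ so that $u_1$ points along the positive $x$-axis and $u_\ell$ along the negative $x$-axis (possible since $r_1,r_\ell$ are of opposite directions). An affine dependence among the $m-2$ points $\{\widehat u_i : i\in A\cup B\}$ is the same as a dependence of the full family with $c_1=c_\ell=0$, i.e. with $a\cdot u_1=a\cdot u_\ell=0$; as both $u_1,u_\ell$ lie on the $x$-axis this forces only the first coordinate of $a$ to vanish, leaving a $1$-dimensional family. Hence $m-2$ points carrying a $1$-dimensional space of affine dependences span an affine subspace of dimension $(m-2)-1-1=m-4$, the asserted hyperplane $H$. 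The same computation applied to $A$ (resp. $B$) alone shows there is no nontrivial affine dependence supported there — the extra conditions $a\cdot u_\beta=0$ for all $\beta\in B$ (resp. $a\cdot u_\alpha=0$ for all $\alpha\in A$) force $a=0$ — so $\{\widehat u_\alpha\}_{\alpha\in A}$ and $\{\widehat u_\beta\}_{\beta\in B}$ are affinely independent, and $\conv\{\widehat u_\alpha\}$, $\conv\{\widehat u_\beta\}$ are simplices of dimensions $|A|-1=\ell-3$ and $|B|-1=m-\ell-1$.

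To produce the intersection point I would feed the distinguished dependence $a=(0,1)$ into the dictionary: writing $t_i$ for the second coordinate of $u_i$, it reads $\sum_i t_i\widehat u_i=0$ with $\sum_i t_i=0$. Since the rays are labelled counterclockwise, $t_\alpha>0$ for $\alpha\in A$, $t_\beta<0$ for $\beta\in B$, and $t_1=t_\ell=0$; setting $s=\sum_{\alpha\in A}t_\alpha=-\sum_{\beta\in B}t_\beta>0$ and normalizing gives $R:=\sum_{\alpha}(t_\alpha/s)\widehat u_\alpha=\sum_{\beta}(-t_\beta/s)\widehat u_\beta$, a common point lying in both relative interiors because all the coefficients are strictly positive and sum to $1$. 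This settles existence.

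Uniqueness I would obtain by a dimension count inside $H$. The affine hulls of the two simplices have dimensions $\ell-3$ and $m-\ell-1$, summing to $m-4=\dim H$, and their joint affine hull is all of $H$ by the first part; hence, since they already meet at $R$, their intersection is an affine space of dimension $(\ell-3)+(m-\ell-1)-(m-4)=0$, i.e. the single point $R$. In particular $\relint\conv\{\widehat u_\alpha\}\cap\relint\conv\{\widehat u_\beta\}\subseteq\{R\}$, and combined with $R$ lying in both relative interiors, the two sets meet in exactly $R$. The only genuinely delicate point is the bookkeeping of the signs of the coordinates $t_i$ across $A$ and $B$, which rests on the counterclockwise labelling and on $A,B$ being nonempty (forced by completeness and non-singularity of $\Sigma$, since no strongly convex non-singular $2$-cone can fill a half-plane); everything else is linear algebra driven by the duality dictionary.
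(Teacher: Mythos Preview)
Your proof is correct and in its core uses the same affine relation as the paper's, namely the one coming from the second row of the matrix $U$ (your vector $a=(0,1)$, the paper's relation \eqref{eqn:planefan} with the $y_i$'s). The construction of the Radon point $R$ is identical.

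Where you diverge is in how you extract the remaining conclusions. The paper deduces that $\dim H=m-4$ by invoking Lemma~\ref{lem:cofaceisasimplex} (a coface is an $(m-3)$-simplex, hence the $m-2$ vertices other than $\widehat 1$ lie in general position on a hyperplane); you instead set up the explicit dictionary ``affine dependences among the $\widehat u_i$ $\leftrightarrow$ row space of $U$'' and read off that the dependences supported on $A\cup B$ form a one-dimensional family, giving $\dim H=(m-2)-1-1=m-4$ by rank--nullity. This is more self-contained: it avoids the external reference to Ewald and, as a bonus, immediately yields that $\{\widehat\alpha\}_{\alpha\in A}$ and $\{\widehat\beta\}_{\beta\in B}$ are each affinely independent. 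You also supply a clean uniqueness argument via the dimension count $(\ell-3)+(m-\ell-1)-(m-4)=0$, which the paper leaves implicit. The paper's route is shorter because it leans on an already-stated lemma; yours is more elementary and makes the Gale-duality mechanism transparent, which pays off later when the same dictionary is used in Lemma~\ref{lem:halfplane}.
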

\begin{proof}
    The point $u_i = \binom{x_i}{y_i}$ is in the upper half-plane if $i\in A$ and in the lower half-plane if $i \in B$. Therefore $y_\alpha >0 $ for $\alpha \in A$ and $y_\beta <0 $ for $\beta \in B$. Noticing that the notation $\widehat{i} = \widehat{u}_{i}$, observe that we have the relation
    \begin{equation}\label{eq:affinerelation}
        \sum_{\alpha\in A}y_\alpha \widehat\alpha + \sum_{\beta \in B}y_\beta \widehat\beta = 0
    \end{equation}
    by the very definition of Shephard diagrams. This is actually an affine relation since $\sum y_\alpha + \sum y_\beta = 0$. Therefore $\{\widehat\alpha\} \cup \{\widehat\beta\}\in \R^{m-3} $ affinely spans a proper subspace $H$ of $\R^{m-3}$. It is easy to show the dimension of $H$ is $m-4$ by applying Lemma~\ref{lem:cofaceisasimplex} for a maximal coface whose vertices are in $H$ except $\widehat{1}$. Finally, the construction of the point $R$ is again by the above affine relation. Put $s = \sum_{\alpha \in A}{y_\alpha} = -\sum_{\beta\in B}y_\beta$. Then
    \begin{equation}\label{eq:radonpoint}
        R = \frac1s  \sum_{\alpha\in A}y_\alpha \widehat\alpha = \frac1s \sum_{\beta \in B}y_\beta \widehat\beta
    \end{equation}
    works for the proof.
\end{proof}
\begin{remark}
    Lemma~\ref{lem:radon} looks similar to the classical Radon's theorem. The point $R$ is the Radon point.
\end{remark}

\begin{lemma}\label{lem:sameside}
    The points $\widehat{1}$ and $\widehat{\ell}$ lie on the same side of $H$.
\end{lemma}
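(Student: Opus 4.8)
The plan is to exhibit a single affine functional that cuts out the hyperplane $H$ and to show that it takes values of the same sign at $\widehat 1$ and $\widehat\ell$. The input is the pair of linear relations built into the Shephard diagram. Writing $u_i=\binom{x_i}{y_i}$, the identity $AB=O$ from the construction of $\widehat\Sigma$ unpacks, row by row, into
\[
    \sum_{i=1}^m x_i\,\widehat i = 0, \qquad \sum_{i=1}^m y_i\,\widehat i = 0,
\]
together with $\sum_i x_i = \sum_i y_i = 0$ coming from the all-ones column (equivalently from Lemma~\ref{lem:sumzero}). The second relation is the affine relation \eqref{eq:affinerelation} already used in Lemma~\ref{lem:radon}; the first relation is the one I would exploit here, since it is the only relation in which $\widehat 1$ and $\widehat\ell$ appear with nonzero coefficients (recall $y_1=y_\ell=0$, while $x_1>0>x_\ell$ because $u_1$ and $u_\ell$ are positive multiples of $\binom10$ and $\binom{-1}0$).

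By Lemma~\ref{lem:radon} the set $\{\widehat\alpha\}_{\alpha\in A}\cup\{\widehat\beta\}_{\beta\in B}$ spans the $(m-4)$-dimensional affine hyperplane $H\subset\R^{m-3}$, so $H$ has a genuine two-sided complement and is the zero set of an affine functional $g(x)=\langle w,x\rangle+b$, unique up to a nonzero scalar. By construction $g(\widehat\alpha)=g(\widehat\beta)=0$, hence $\langle w,\widehat\alpha\rangle=\langle w,\widehat\beta\rangle=-b$ for every $\alpha\in A$ and $\beta\in B$. I would then apply the linear part $\langle w,\cdot\rangle$ to the $x$-relation rearranged as $x_1\widehat 1 + x_\ell\widehat\ell = -\sum_{\alpha\in A}x_\alpha\widehat\alpha - \sum_{\beta\in B}x_\beta\widehat\beta$, obtaining
\[
    x_1\langle w,\widehat 1\rangle + x_\ell\langle w,\widehat\ell\rangle = b\Big(\sum_{\alpha\in A}x_\alpha + \sum_{\beta\in B}x_\beta\Big) = -b\,(x_1+x_\ell),
\]
where the last step uses $\sum_i x_i=0$. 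Moving the constants to the left collapses this into the clean identity
\[
    x_1\,g(\widehat 1) + x_\ell\,g(\widehat\ell) = 0.
\]

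Since $x_1>0$ and $x_\ell<0$, this forces $g(\widehat 1)$ and $g(\widehat\ell)$ to carry the same sign, which is exactly the assertion that $\widehat 1$ and $\widehat\ell$ lie on the same side of $H$. To upgrade ``same sign'' to ``strictly on the same side'' I would note that $\widehat 1$ cannot lie on $H$: otherwise $g(\widehat 1)=0$ would force $g(\widehat\ell)=0$ by the identity above, placing all $m$ points of the diagram on the $(m-4)$-dimensional $H$, contradicting that the two displayed relations are the only affine dependencies among the $\widehat i$ (the matrix $B$ has maximal rank $m-2$, so its left null space is exactly the $2$-dimensional span of the rows of $A$, whence the $\widehat i$ affinely span $\R^{m-3}$). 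The only real subtlety I anticipate is bookkeeping: being sure that it is the horizontal relation $\sum_i x_i\widehat i=0$, and not the vertical one, that must be fed into $g$, and that the cancellation on the right-hand side is precisely governed by the balance condition $\sum_i x_i=0$. Everything else is a short linear computation.
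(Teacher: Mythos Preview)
Your argument is correct and takes a genuinely different route from the paper. The paper argues indirectly: since every complex $2$-dimensional toric manifold is projective, Shephard's criterion gives $S(\widehat\Sigma)\ne\varnothing$, so all cofaces meet; but a coface of a maximal cone containing $r_1$ is a simplex whose only vertex off $H$ is $\widehat\ell$, while a coface of a cone containing $r_\ell$ has only $\widehat 1$ off $H$, and if $\widehat 1$ and $\widehat\ell$ lay on opposite sides of $H$ these two open simplices could not intersect. Your approach instead extracts the claim directly from the first-row relation $\sum_i x_i\,\widehat i=0$ built into the Shephard diagram, with no appeal to projectivity of toric surfaces. This is more self-contained and in fact more precise, since the identity $x_1\,g(\widehat 1)+x_\ell\,g(\widehat\ell)=0$ shows that the signed distances of $\widehat 1$ and $\widehat\ell$ from $H$ are positively proportional. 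The paper's argument is shorter if one is happy to invoke surface projectivity as a black box, and it dovetails with the coface geometry used immediately afterwards; your argument, by contrast, makes the lemma logically independent of that input.
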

\begin{proof}
    By the projectivity of toric manifolds of complex dimension $2$, the cofaces $\widehat{C}$ intersects to each other. By Lemma~\ref{lem:cofaceisasimplex}, $\widehat{1}\notin H$ and $\widehat{\ell}\notin H$. If $\widehat{1}$ and $\widehat{\ell}$ are on the different sides of $H$, a coface with vertex $\widehat{1}$ and another coface with vertex $\widehat{\ell}$ do not intersect.
\end{proof}

\begin{proposition}\label{prop:smalltriangle}
    Every coface of $\Sigma$ contains $B_\epsilon(R) \cap \widehat{1}\widehat{\ell}R$ for some $\epsilon > 0$.
\end{proposition}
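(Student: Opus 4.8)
The plan is to reduce the statement to cofaces of maximal cones and then to analyze the position of the triangle $\widehat 1\widehat\ell R$ relative to each such coface by passing to barycentric coordinates. By the remark preceding the proposition (only cofaces of maximal cones are needed to compute $S(\widehat\Sigma)$), it suffices to treat a maximal cone $C=\pos(v_i,v_{i+1})$; its coface is $\widehat C=\relint\conv V$ with $V=\{\widehat k\mid k\neq i,i+1\}$, and by Lemma~\ref{lem:cofaceisasimplex} the $m-2$ points of $V$ are affinely independent, so $\widehat C$ is the open $(m-3)$-simplex cut out by the positivity of the barycentric coordinates $\lambda_w$, $w\in V$.

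First I would record the two expansions of the Radon point furnished by Lemma~\ref{lem:radon}, namely $R=\sum_{\alpha\in A}\nu_\alpha\widehat\alpha=\sum_{\beta\in B}\mu_\beta\widehat\beta$ with all $\nu_\alpha,\mu_\beta>0$ (explicitly $\nu_\alpha=y_\alpha/s$, $\mu_\beta=-y_\beta/s$). Since $V$ is an affine basis, these expansions determine the coordinates of $R$ with respect to $V$, and hence the set $Z=\{w\in V\mid\lambda_w(R)=0\}$ on whose face $R$ sits. Because $\widehat C$ is an open simplex, the containment $B_\epsilon(R)\cap\widehat 1\widehat\ell R\subset\widehat C$ for small $\epsilon$ is equivalent to the purely local condition that every direction $d=a(\widehat 1-R)+b(\widehat\ell-R)$ with $a,b>0$ points into $\widehat C$, i.e.
\[
    \delta_w(d)>0\quad\text{for every }w\in Z,
\]
where $\delta_w$ is the constant derivative of the affine coordinate $\lambda_w$. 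Finiteness of the set of maximal cones then produces a single $\epsilon$.

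Next I would split into cases according to the removed adjacent pair $\{i,i+1\}$. If one of the removed rays is $r_1$ or $r_\ell$, the remaining diagram keeps all of the opposite side together with the surviving endpoint, and the corresponding expansion of $R$ (supported on that intact side) shows that although the leftover same-side vertices lie in $Z$, the \emph{removed} endpoint is represented with strictly positive weight on precisely those vertices; thus $\delta_w(\widehat 1-R)>0$ (resp.\ $\delta_w(\widehat\ell-R)>0$) there, while Lemma~\ref{lem:sameside} places $\widehat 1,\widehat\ell$ strictly on the interior side of $H$ so that the remaining coordinate in $Z$ is also activated. In every such configuration the local condition holds and the sliver lands in the open coface.

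The step I expect to be the main obstacle is the opposite case, where the two removed rays lie strictly on the \emph{same} side of $r_1\cup r_\ell$ (both in $A$, or both in $B$) while that side still carries further rays. Then both $\widehat 1$ and $\widehat\ell$ are honest vertices of $V$, so their barycentric coordinates are indicators and $\delta_w(\widehat 1-R)=\delta_w(\widehat\ell-R)=0$ on each surviving same-side vertex $w\in Z$; consequently the triangle is tangent to, and its tip near $R$ falls into, the \emph{relative interior of the facet} of $\widehat C$ opposite those vertices rather than into the open simplex. This is exactly where a naive open-containment argument breaks, and resolving it is the crux: one must either read the containment in the closure $\overline{\widehat C}$ — which is what the subsequent intersection argument for $S(\widehat\Sigma)$ actually consumes, since that facet is shared and the neighbouring endpoint-type cofaces already pin the sliver to the correct side of $H$ by Lemma~\ref{lem:sameside} — or replace the sliver by a nearby one pushed off the facet in a direction kept positive on all of $Z$. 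I would therefore finish the clean endpoint and straddling cases in full, isolate this same-side configuration, and argue it through the facet/closure description, verifying throughout that Lemma~\ref{lem:sameside} governs the two directions transverse to $H$.
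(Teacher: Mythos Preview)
Your barycentric-coordinate framework is a different route from the paper's. The paper splits according to whether $r_1\subset C$, $r_\ell\subset C$, or neither; in the first two cases it does not use barycentric coordinates at all, but eliminates the missing vertex (for instance $\widehat m$, when $C=\pos(r_1,r_m)$) from the two affine relations that define the Shephard diagram, producing a single affine relation among $\widehat 1,\{\widehat\alpha\},\widehat\ell,\{\widehat\beta\}_{\beta\ne m}$ and reading off from its sign pattern that the ray from $R$ toward $\widehat 1$ enters $\relint\conv\{\widehat\ell,\widehat\beta\mid\beta\ne m\}$. This is considerably more explicit than your sketch of the endpoint cases, where you only assert that the removed endpoint acquires positive weight on the leftover same-side vertices without deriving that weight from the Shephard relations.

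The configuration you isolate as the main obstacle is a genuine gap, and your diagnosis is exact. When $C=\pos(r_{\alpha_0},r_{\alpha_0+1})$ with both rays in $A$ and $|A|\ge 3$, the vertex set $V$ of $\widehat C$ contains $\widehat 1,\widehat\ell$, every $\widehat\beta$, and at least one surviving $\widehat\alpha$; since $V$ is affinely independent by Lemma~\ref{lem:cofaceisasimplex}, the expression $R=\sum_\beta\mu_\beta\widehat\beta$ is the \emph{unique} barycentric expansion of $R$ in $V$, so every interior point of the triangle $\widehat 1\widehat\ell R$ has vanishing $\widehat\alpha$-coordinate and therefore lies on the facet of $\overline{\widehat C}$ opposite that $\widehat\alpha$, not in the open coface $\widehat C$. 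Hence the proposition, read literally, fails in this case. The paper disposes of exactly this subcase with the phrase ``$\partial\widehat C$ contains $R$ and Case~(3) is easily dealt with,'' so it shares the same lacuna. Your closure reading is the right instinct for what the proof of Theorem~\ref{thm:cpisproj} actually consumes, but it does not prove the proposition as written; a genuine repair must replace the flat triangle by a nearby sliver that also carries strictly positive weight on the surviving $\widehat\alpha$'s (and symmetrically on the $\widehat\beta$'s), and establishing that one such sliver works uniformly for all cofaces requires an additional argument that neither you nor the paper supplies.
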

\begin{proof}
    Let $C$ be a maximal cone of $\Sigma$ in $\R^2$. Then one of the following is true:
    \begin{enumerate}
        \item $r_1 \subset C$ and $\widehat\ell$ is a vertex of $\widehat C$.
        \item $r_\ell \subset C$ and $\widehat 1 $ is a vertex of $ \widehat C$.
        \item $r_1,r_\ell \not\subset C$ and $\widehat 1,\widehat\ell $ are vertices of $\widehat C$.
    \end{enumerate}
    For every coface $\widehat C$, note that the vertex set of $\widehat C$ either contains $\widehat\alpha$ for all $\alpha\in A$ or contains $\widehat\beta$ for all $\beta\in B$. Therefore $\partial\widehat C$ contains $R$ and Case (3) is easily dealt with. For Case (1) and (2), we can safely assume that $C$ is the one spanned by $r_1$ and $r_\ell$. We perform a calculation similar to that of Lemma~\ref{lem:halfplane}. For the notation of the matrix, we divide the set of the rays of $\Sigma$ into five categories: $\{1\}$,  $A$, $\{\ell\}$, $B\setminus\{m\}$, and $\{m\}$ (the last ray) and the relation for the Shephard transform looks like the following.
    \begin{equation*}
        \begin{pmatrix}
            x_1 & x_\alpha & x_\ell & x_\beta & x_m \\
            0   & y_\alpha & 0      & y_\beta & y_m
        \end{pmatrix}
        \left(\begin{array}{c|c}
            \widehat{1} & 1 \\
            \widehat{\alpha} & 1\\
            \widehat{\ell} & 1 \\
            \widehat{\beta} & 1 \\
            \widehat{m}   & 1
        \end{array}\right) = O.
    \end{equation*}
    From the above equation, we have two affine relations
    \[
        x_1\widehat 1 + \sum x_\alpha \widehat\alpha + x_\ell\widehat\ell + \sum_{\beta\ne m} x_\beta\widehat\beta + x_m \widehat m = 0
    \]
    and
    \[
        \sum y_\alpha\widehat \alpha + \sum_{\beta\ne m} y_\beta\widehat\beta + y_m \widehat m = 0.
    \]
    If we eliminate the term $\widehat m$ from the above two relations, we obtain
    \[
        x_1y_m \widehat 1 + \sum_\alpha(x_\alpha y_m - x_m y_\alpha) \widehat \alpha + x_\ell y_m \widehat \ell + \sum_{\beta\ne m} (x_\beta y_m - x_m y_\beta)\widehat\beta = 0
    \]
    which is also an affine relation. From the facts $x_\ell y_m >0$ and $x_\beta y_m - x_m y_\beta >0$, one concludes that the affine hull of $\{\widehat\alpha,\widehat 1\}$ intersects with $\relint \conv \{\widehat\beta, \ell\mid  {\beta\ne m}\}$. Furthermore, because  $x_1 y_m <0$, any ray from a point of $\relint \conv \{\alpha\}$ to $\widehat 1$ intersects with $\relint \conv \{\widehat\beta, \ell\mid {\beta\ne m}\}$. Now it is easy to show that $\widehat C = \relint \conv \{\widehat\alpha, \widehat\beta, \widehat\ell \mid \beta\ne m \}$ contains $B_\epsilon(R) \cap \widehat{1}\widehat{\ell}R$ for some $\epsilon > 0$.

\end{proof}

Next, we consider the case $j_1=2$ and $j_\ell = 1$. Let us write $1_1=1,\,1_2=1'$, and $\ell_1 = \ell$. 

\begin{lemma}\label{lem:halfplane}
    The points $\widehat{1}$, $\widehat{1}'$, $\widehat{\ell}$, and the Radon point $R$ are on 2-dimensional affine space.
\end{lemma}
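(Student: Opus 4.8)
The plan is to produce a single nontrivial affine relation among $\widehat 1$, $\widehat 1'$, and the Radon point $R$; such a relation forces these three points to be collinear, and since adjoining $\widehat\ell$ can raise the affine dimension by at most one, the four points $\widehat 1,\widehat 1',\widehat\ell,R$ then lie in a $2$-dimensional affine space, which is exactly the assertion.

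To set this up I would write the fan $\Sigma$ over $\wed_1 P_m$ in the standard form used in the proof of Proposition~\ref{prop:shift}. Its characteristic matrix has three rows: the $x$-row, the $y$-row $\rho_2$, and the wedge row $\rho_3$ carrying $-1$ and $1$ in the columns $1$ and $1'$. After the normalization $e_2=0$, the shift description of Proposition~\ref{prop:shift} gives the wedge row the form $e_\beta=-y_\beta e$ for $\beta\in B$ and $e_i=0$ for $i\in A\cup\{\ell\}$; in particular this normalization is what forces the shift to act on the half-plane $B$, the one not containing the ray $2$. Rescaling the columns by positive reals so that the generators sum to zero (Lemma~\ref{lem:sumzero}), each row, paired against the Shephard diagram, becomes an affine relation among the $\widehat u_i$, exactly as \eqref{eq:affinerelation} arises from the $y$-row. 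Writing the scaled generators as $u_{1}=t_{1}(1,0,-1)$ and $u_{1'}=t_{1'}(0,0,1)$ with $t_1,t_{1'}>0$, the third coordinate of $u_\beta$ equals $-y_\beta e$ for every $\beta\in B$.

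The decisive step is to form the combination $\rho_3+e\,\rho_2$. Column by column, the coefficient at each $\beta\in B$ is $-y_\beta e+e\,y_\beta=0$, the coefficient at $\ell$ is $0$ (there $y_\ell=0$ and $e_\ell=0$), while the coefficients at $1$, $1'$ and $\alpha\in A$ survive as $-t_1$, $t_{1'}$ and $e\,y_\alpha$. Thus every $\widehat\beta$ and $\widehat\ell$ term cancels and we are left with
\[
    -t_1\,\widehat 1 + t_{1'}\,\widehat 1' + e\sum_{\alpha\in A} y_\alpha\,\widehat\alpha = 0 .
\]
By the definition of $R$ in \eqref{eq:radonpoint}, $\sum_{\alpha\in A}y_\alpha\widehat\alpha = sR$ with $s=\sum_{\alpha\in A}y_\alpha>0$, so this reads $-t_1\,\widehat 1 + t_{1'}\,\widehat 1' + es\,R=0$. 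Being a combination of zero-sum rows it is again zero-sum, $-t_1+t_{1'}+es=0$, hence a genuine affine relation; and it is nontrivial because $t_1>0$. Therefore $\widehat 1,\widehat 1',R$ are collinear, which yields the lemma.

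The only real difficulty here is bookkeeping rather than ideas: one must fix the shift form of $\rho_3$ (namely $e_\beta=-y_\beta e$ and $e_\alpha=e_\ell=0$) in a way compatible with the normalization $e_2=0$, and track the positive scalings carefully enough that $\sum_{\alpha\in A} y_\alpha\widehat\alpha$ is exactly the multiple $sR$ of the Radon point appearing in \eqref{eq:radonpoint}. Once the cancellation in $\rho_3+e\,\rho_2$ is observed, everything else is immediate.
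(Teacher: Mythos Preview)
Your proof is correct and takes a somewhat different route from the paper's. The paper derives the $x$-row affine relations for the two projected planar fans $\Sigma_2$ and $\Sigma_1$ separately (equations \eqref{eq:sigma2} and \eqref{eq:sigma1}), introducing extra positive scaling factors $a_1,a_\ell$ so that the columns for $\Sigma_1$ sum to zero, and then subtracts to obtain the affine relation \eqref{eq:1ellplane} among all four points $\widehat 1,\widehat 1',\widehat\ell,R$. You instead work directly with the three rows of the rescaled $3$-dimensional characteristic matrix of $\Sigma$ and observe that the single combination $\rho_3+e\,\rho_2$ kills the $\widehat\beta$- and $\widehat\ell$-coefficients simultaneously, producing an affine relation among $\widehat 1,\widehat 1',R$ alone. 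This avoids the auxiliary scalings $a_1,a_\ell$ altogether and in fact proves the sharper statement that $\widehat 1,\widehat 1',R$ are collinear; the lemma then follows trivially by adjoining $\widehat\ell$. Both arguments ultimately rest on the same observation---that the wedge-row entries satisfy $e_\beta=-y_\beta e$ on the $B$-side and vanish on $A\cup\{\ell\}$---so the difference is one of packaging: eliminating the $\widehat\beta$'s via two projections versus via one row operation in the wedged matrix. Your collinearity statement also streamlines the passage to Proposition~\ref{prop:halfplane}, since applying it to every pair $\widehat 1_i,\widehat 1_j$ puts all the $\widehat 1_i$ on a single line through $R$, and symmetrically for the $\widehat\ell_k$.
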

\begin{proof}
    For simplicity of notation, we divide the set of the rays of $\Sigma$ into four parts: $\{1,1'\}$,  $A$, $\{\ell\}$, and $B$. By Proposition~\ref{prop:shephardofwedge}, we have $ \widehat{\Sigma}_2 = \{\widehat{1}_1, \widehat{2}, \dotsc, \widehat{m} \}$. A Shephard diagram for $\Sigma_2$ is given by the identity
    \begin{equation*}
        \begin{pmatrix}
            x_1 & x_\alpha & x_\ell & x_\beta \\
            0   & y_\alpha & 0      & y_\beta
        \end{pmatrix}
        \left(\begin{array}{c|c}
            \widehat{1} & 1 \\
            \widehat{\alpha} & 1\\
            \widehat{\ell} & 1 \\
            \widehat{\beta} & 1
        \end{array}\right) = O.
    \end{equation*}
    Here, we get the following
    \begin{equation}\label{eq:sigma2}
        x_1\widehat{1} + \sum x_\alpha \widehat\alpha + x_\ell\widehat\ell + \sum x_\beta\widehat\beta =  0
    \end{equation}
    and
    \begin{equation}\label{eq:sum2}
        x_1 + \sum x_\alpha + x_\ell + \sum x_\beta = 0.
    \end{equation}
    The rays of $\Sigma_1$ are generated by the column vectors of the following matrix
    \[
        \begin{pmatrix}
            x_1 & x_\alpha & x_\ell & x_\beta-ey_\beta \\
            0   & y_\alpha & 0      & y_\beta
        \end{pmatrix}
    \]
    due to Proposition~\ref{prop:shift}. After multiplying a positive real $a_i$ to each column (Lemma~\ref{lem:sumzero}), a Shephard diagram for $\Sigma_1$ is given by the identity
    \[
        \begin{pmatrix}
            a_1x_1 & a_\alpha x_\alpha & a_\ell x_\ell & a_\beta(x_\beta-ey_\beta) \\
            0   & a_\alpha y_\alpha & 0      & a_\beta y_\beta
        \end{pmatrix}
        \left(\begin{array}{c|c}
            \widehat{1}' & 1 \\
            \widehat{\alpha} & 1\\
            \widehat{\ell} & 1 \\
            \widehat{\beta} & 1
        \end{array}\right) = O.
    \]
    This identity gives another affine relation $\sum a_\alpha y_\alpha \widehat{\alpha} + \sum a_\beta y_\beta \widehat{\beta} = 0$. But up to scaling, \eqref{eq:affinerelation} is the unique affine relation between $\widehat\alpha$'s and $\widehat\beta$'s. Therefore $a_\alpha=a_\beta$ for all $\alpha\in A$ and $\beta\in B$. By dividing the leftmost matrix by $a_\alpha$, we can further assume that $a_\alpha=a_\beta = 1$. Hence the result is
    \[
         \begin{pmatrix}
            a_1x_1 &  x_\alpha & a_\ell x_\ell &  x_\beta-ey_\beta  \\
            0   &  y_\alpha & 0      &  y_\beta
        \end{pmatrix}
        \left(\begin{array}{c|c}
            \widehat{1}' & 1 \\
            \widehat{\alpha} & 1\\
            \widehat{\ell} & 1 \\
            \widehat{\beta} & 1
        \end{array}\right) = O,
    \]
    which deduces
    \begin{equation}\label{eq:sigma1}
        a_1x_1\widehat{1}' + \sum x_\alpha\widehat\alpha + a_\ell x_{\ell} \widehat\ell + \sum(x_\beta-ey_\beta)\widehat\beta = 0
    \end{equation}
    and
    \begin{equation}\label{eq:sum1}
        a_1x_1 + \sum x_\alpha + a_\ell x_\ell + \sum x_\beta -e\sum y_\beta = 0.
    \end{equation}
    By subtracting \eqref{eq:sigma2} from \eqref{eq:sigma1}, we get
    \[
        -x_1\widehat{1} + a_1x_1\widehat{1}' +  (a_\ell x_{\ell}-x_\ell) \widehat\ell - e\sum y_\beta \widehat\beta   = 0.
    \]
    Remember \eqref{eq:radonpoint} and then
    \begin{equation}\label{eq:1ellplane}
        -x_1\widehat{1} + a_1x_1\widehat{1}' +  (a_\ell x_{\ell}-x_\ell) \widehat\ell + esR   = 0.
    \end{equation}
    Similarly subtracting \eqref{eq:sum2} from \eqref{eq:sum1}, we get
    \begin{equation}\label{eq:1ellplanesumzero}
        -x_1 + a_1x_1  + a_\ell x_\ell -  x_\ell + es   = 0.
    \end{equation}
    The identities \eqref{eq:1ellplane} and \eqref{eq:1ellplanesumzero} provide the wanted affine relation.
\end{proof}

{In summary, the following holds.}

\begin{proposition}\label{prop:halfplane}
    Let $\Sigma$ be a complete non-singular fan over $P_m(J)$. Assume that $r_1$ and $r_\ell$ are of opposite directions and $j_i = 1$ for $i\ne 1$ and $i \ne \ell$. Then all of the points $\widehat{1}_i$ and $\widehat{\ell}_k$ lie on an open half-space $\mathcal{H} $ of dimension 2 transversally intersecting $H$ such that $R \in \partial\mathcal{H} \subseteq H$. Furthermore, no ray $\overrightarrow{R\widehat{\ell}_k}$ is between $\overrightarrow{R\widehat{1}_i}$ and $\overrightarrow{R\widehat{1}_j}$ and vice versa.
\end{proposition}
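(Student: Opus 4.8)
The plan is to read off the statement directly from the Shephard relation of the big fan $\Sigma$, rather than treating the planar projections one at a time. The point I would aim for is in fact stronger than what is asked: I want to show that $\widehat 1_1,\dots,\widehat 1_{j_1}$ together with $R$ are \emph{collinear}, lying on a single line $L_1$ through $R$, and symmetrically that $\widehat\ell_1,\dots,\widehat\ell_{j_\ell}$ and $R$ lie on a single line $L_\ell$ through $R$. Once this is established everything else is formal: $\mathcal H$ is an open half of the plane $P:=\mathrm{aff}(L_1\cup L_\ell)$, and the non-interleaving clause is vacuous because each wedged family collapses onto one ray from $R$.

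First I would fix a sum-zero scaling of the rays (Lemma~\ref{lem:sumzero}), write the scaled characteristic matrix $M$ and the Shephard relation $M\cdot[\,\widehat{\ }\mid\mathbf 1\,]=O$, so that each row of $M$ gives one affine relation among the $\widehat{\ }$'s. The crucial observation is that the rows of $M$ coming from the wedge coordinates at vertex $1$ are \emph{decoupled}: by Proposition~\ref{prop:shift} the shift attached to a wedge at $1$ moves only the rays $r_\beta$ with $\beta\in B$, so these rows vanish in the columns indexed by $\widehat\alpha\ (\alpha\in A)$, by $\widehat\ell$, and by $\widehat\ell_k$, while their $\beta$-entries are proportional to $y_\beta$. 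Contracting such a row against $[\,\widehat{\ }\mid\mathbf 1\,]$ and using~\eqref{eq:affinerelation} and~\eqref{eq:radonpoint} (which make $\sum_{\beta\in B}y_\beta\widehat\beta$ a scalar multiple of $R$), each of the $j_1-1$ wedge-at-$1$ rows becomes a relation of the shape $(\text{affine combination of the }\widehat 1_i)=(\text{scalar})\,R$. Once one checks these $j_1-1$ relations are independent, the $j_1+1$ points $\{\widehat 1_1,\dots,\widehat 1_{j_1},R\}$ span only the line $L_1$; the same argument at vertex $\ell$ (whose shift moves the rays $r_\alpha$, $\alpha\in A$) puts $\{\widehat\ell_1,\dots,\widehat\ell_{j_\ell},R\}$ on a line $L_\ell$. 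This is exactly the computation of Lemma~\ref{lem:halfplane} for $j_1=2$, reorganized so that the $\widehat\ell$-term drops out automatically.

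With collinearity in hand the assembly is routine. By Lemma~\ref{lem:cofaceisasimplex} none of $\widehat 1_i,\widehat\ell_k$ lies on $H$, and applying Lemma~\ref{lem:sameside} to each planar projection (the fan using $1_i$ and $\ell_k$, whose Shephard diagram is the subset $\{\widehat 1_i,\widehat\ell_k\}\cup\{\widehat\alpha\}\cup\{\widehat\beta\}$ by Proposition~\ref{prop:shephardofwedge}) and using transitivity through $\widehat 1_1$ shows all the $\widehat 1_i,\widehat\ell_k$ lie strictly on one side of $H$. Hence each of $L_1,L_\ell$ meets $H$ only at $R$, and the wedged points lie on the two rays $\rho_1,\rho_\ell$ from $R$ into that side. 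Setting $P=\mathrm{aff}(L_1\cup L_\ell)$ — genuinely $2$-dimensional once one checks $L_1\neq L_\ell$, i.e.\ that $\widehat 1_1,\widehat\ell_1,R$ are affinely independent, which follows from the non-degeneracy of the planar coface $\widehat 1_1\widehat\ell_1 R$ in Proposition~\ref{prop:smalltriangle} — a dimension count gives $\dim(P\cap H)=2+(m-4)-(m-3)=1$, so $P\cap H=:\partial\mathcal H$ is a line through $R$ transverse to $P$. Taking $\mathcal H$ to be the open half-plane of $P$ bounded by $\partial\mathcal H$ containing $\rho_1\setminus\{R\}$ and $\rho_\ell\setminus\{R\}$ gives the asserted half-space, and since $\overrightarrow{R\widehat 1_i}=\rho_1$ for all $i$ and $\overrightarrow{R\widehat\ell_k}=\rho_\ell$ for all $k$ with $\rho_1\neq\rho_\ell$, no $\overrightarrow{R\widehat\ell_k}$ can lie between two $\overrightarrow{R\widehat 1_i}$'s, and conversely.

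I expect the main obstacle to be the decoupling/collinearity step. The real work is to read off, from the iterated-wedge normal form of $M$, that the wedge-at-$1$ rows are supported only on the columns $\{\widehat 1_i\}\cup\{\widehat\beta\}$ with $\beta$-coefficients proportional to $y_\beta$ (so the contraction collapses to a multiple of $R$), and that the resulting $j_1-1$ relations are linearly independent and therefore cut $\mathrm{aff}(\{\widehat 1_i\}\cup\{R\})$ down to a line. By contrast the ``same side of $H$'' and transversality bookkeeping is soft and parallels Lemmas~\ref{lem:radon}--\ref{lem:halfplane}.
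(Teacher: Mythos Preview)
Your approach is correct and in fact yields a sharper fact than the paper records. The paper argues via the planar projections: Lemma~\ref{lem:halfplane} subtracts the two first-row relations~\eqref{eq:sigma2} and~\eqref{eq:sigma1} of $\Sigma_2$ and $\Sigma_1$ to obtain~\eqref{eq:1ellplane}, which places $\widehat1,\widehat1',\widehat\ell,R$ on a common $2$-plane; iterating over pairs gives the $2$-plane for all $\widehat1_i,\widehat\ell_k$, and the non-interleaving of the rays is then read off from the opposite signs of the $\widehat1$- and $\widehat1'$-coefficients in~\eqref{eq:1ellplane}. By contrast, you read the wedge row of the big scaled matrix directly. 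Since that row has a zero entry in the $\ell$-column (and in all $\alpha$-columns) \emph{before} scaling, its contraction with the Shephard matrix is an affine relation involving only $\widehat1_1,\widehat1_2$ and $\sum_\beta c_\beta y_\beta\widehat\beta$, and the latter is a scalar multiple of $R$ by the second-row relation. This gives the genuinely stronger conclusion that $\widehat1_1,\widehat1_2,R$ are collinear (and on the same ray from $R$, since the coefficients $-c_{1_1},c_{1_2}$ have opposite signs), which the paper does not state --- Figure~\ref{fig:halfplane} is drawn as though the rays $\overrightarrow{R\widehat1_i}$ could differ. The non-interleaving clause then becomes vacuous, exactly as you say.

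On your anticipated obstacle: you can avoid analysing the full iterated-wedge normal form. Once the single-wedge computation above is done, apply it pairwise via Proposition~\ref{prop:shephardofwedge}: for any $i\ne j$ the subset $\{\widehat1_i,\widehat1_j,\widehat\ell_1\}\cup\{\widehat\alpha\}_{\alpha\in A}\cup\{\widehat\beta\}_{\beta\in B}$ is a Shephard diagram of a fan over $\wed_1P_m$, so $\widehat1_i,\widehat1_j,R$ are already collinear. Ranging over all pairs puts every $\widehat1_i$ on one line $L_1$ through $R$, and symmetrically for the $\widehat\ell_k$. This dispatches both the ``independence of the $j_1-1$ relations'' and the ``support of the higher wedge rows'' issues in one stroke, so the part you flagged as the main work is actually no harder than the $j_1=2$ case.
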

\begin{proof}
    By Lemma~\ref{lem:halfplane}, every point $\widehat{1}_i$ and $\widehat{\ell}_k$ lie on an affine 2-space. One also reminds Lemma~\ref{lem:sameside}. For the last assertion, see the affine relation \eqref{eq:1ellplane} where the coefficients of $\widehat{1}$ and $\widehat{1}'$ have different signs.
\end{proof}
\begin{figure}
    \begin{tikzpicture}[scale = 1.2]
        \coordinate [label = below:{$R$}] (r) at (0,0);
        \coordinate [label = above:{$\widehat{1}_1$}] (1) at (-2,2.5);
        \coordinate [label = above:{$\widehat{1}_j$}] (2) at (-.4,3);
        \coordinate [label = above:{$\widehat{\ell}_p$}] (3) at (1.6,3);
        \coordinate [label = above right:{$\widehat{\ell}_1$}] (4) at (2.6,2);
        \fill [red] (r) -- (-.1,.75) -- (.4,.75);
        \node [right] at (3,1) {$H$};
        \node [below right] at (1,0) {$\partial\mathcal{H}$};
        \node at (-.8,1.8) {$\cdots$};
        \node at (1.5,1.8) {$\cdots$};
        \draw [very thin] (-2,1) -- (3,1) -- (2,-1) -- (-3,-1) -- cycle;
        \draw [thick] (-1.5,0) -- (1.5,0);
        \draw [thick] (r)--(1) (r)--(2) (r)--(3) (r)--(4);
    \end{tikzpicture}
    \caption{The rays in the half-plane $\mathcal{H}$.}\label{fig:halfplane}
\end{figure}
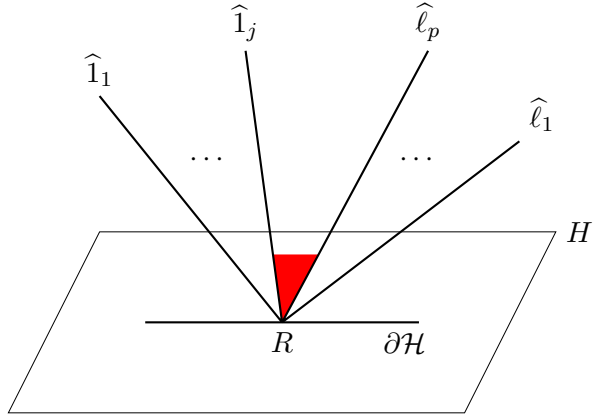

Now we are ready to prove the main theorem.

\begin{proof}[proof of Theorem~\ref{thm:cpisproj}]
    By Proposition~\ref{prop:halfplane}, every triangle $\widehat{1}_i\widehat{\ell}_kR$ contains the colored region in Figure~\ref{fig:halfplane} which is the intersection of $\widehat{1}_j\widehat{\ell}_pR$ and a neighborhood of $R$, where $\widehat{1}_j\widehat{\ell}_pR$ is the ``innermost'' triangle.
    Hence, every coface of $\Sigma$ contains $B_\epsilon(R) \cap \widehat{1}_j\widehat{\ell}_pR$ by Proposition~\ref{prop:smalltriangle} and $S(\Sigma) \supseteq B_\epsilon(R) \cap \widehat{1}_j\widehat{\ell}_pR$. In particular, $S(\Sigma)\ne\varnothing$.
\end{proof}

\end{document}